\theoremstyle{plain}
\newtheorem{theorem}{Theorem}
\newtheorem{proposition}[theorem]{Proposition}
\newtheorem{lemma}[theorem]{Lemma}
\newtheorem{conjecture}[theorem]{Conjecture}
\theoremstyle{remark}
\newtheorem{remark}[theorem]{Remark}
\numberwithin{equation}{section}
\numberwithin{theorem}{section}
\newcommand{\be}%
  {\protect\setcounter{equation}{\value{subsubsection}}}  
\newcommand{\ee}%
  {\protect\setcounter{subsubsection}{\value{equation}}}
\DeclareMathAlphabet\BOONDOX{U}{rsfso}{m}{n}
\newcommand{\R}{{\mathbb R}}
\newcommand{\C}{{\mathbb C}}
\newcommand{\sel}{\BOONDOX{S}}
\newcommand{\esel}{{\sel}^{\#}}
\newcommand{\aselp}{{\mathfrak A}}
\newcommand{\asel}{{\aselp}^{\#}}
\newcommand{\bselp}{\BOONDOX{L}}
\newcommand{\aq}{{\mathbb A}_{\mathbb Q}}
\newcommand{\ak}{{\mathbb A}_k}
\newcommand{\glnak}{{\rm GL}_n(\ak)}
\newcommand{\hktz}{H(\alpha,T)}
\newcommand{\res}{\text{Re}(s)}
\newcommand{\re}{\text{Re}}
\newcommand{\Arg}{\operatorname{Arg}}
\begin{document}


\title{Beyond the extended Selberg class: $1<d_F< 2$}
\author{R. Balasubramanian$^{1,2,3}$}
\author{Ravi Raghunathan$^1$}

\address{$^1$Department of Mathematics \\ 
         Indian Institute of Technology Bombay\\
         Mumbai, 400076\\ India}
\address{$^2$The Institute of Mathematical Sciences\\
            Chennai, 600113\\India}
\address{$^3$Homi Bhabha National Institute\\
            Mumbai, 400094\\India}
\email{balu@imsc.res.in}               
\email{ravir@math.iitb.ac.in}
\subjclass[2000]{11F66, 11M41}
\keywords{Selberg class, automorphic $L$-functions, degree conjecture, converse theorems}

\begin{abstract} 
We show that a class of Dirichlet series $\asel$ that is much larger than the
extended Selberg class $\esel$, and also contains the standard as well as the tensor product, 
exterior square and symmetric square $L$-functions of automorphic $L$-functions of $GL_n$ 
over number fields, does not have any elements of
degrees between $1$ and $2$.  The proof of our more general theorem is very different
from the proof of Kaczorowski and Perelli for the class $\esel$, and 
is much shorter and simpler even in that case. 
\end{abstract}
\vskip 0.2cm

\maketitle


\markboth{R. BALASUBRAMANIAN AND RAVI RAGHUNATHAN}{On the classification of elements in $\asel$: $1<d<2$}


\section{Introduction}\label{introduction}
Let $s=\sigma+it\in \C$ and let $F(s)$ be a non-zero meromorphic function on $\C$. 
We consider the following conditions on $F(s)$.
\begin{enumerate}
\item [(P1)] The function $F(s)$ is given by a
Dirichlet series $\sum_{n=1}^{\infty}\frac{a_n}{n^{s}}$ 
with abscissa of absolute convergence $\sigma_a\ge 1/2$.
\item[(P2)] There is a polynomial $P(s)$ such that $P(s)F(s)$ 
extends to an entire function, and such that in  any
vertical strip $\sigma_1\le \res \le \sigma_2$, $P(s)F(s)\ll (1+t)^M$ for some $M\in\R$.
\item[(P3)] There exist a real number $Q>0$, a complex number $\omega$ such that 
$\vert\omega\vert=1$, and a function $G(s)$ of the form
\begin{equation}\label{gammafactor}
G(s)=\prod_{j=1}^{r}\Gamma(\lambda_j s+\mu_j)\prod_{j'=1}^{r'}\Gamma(\lambda_{j^{\prime}}^{\prime}s+\mu_{j^{\prime}}^{\prime})^{-1},
\end{equation}
where $\lambda_j, \lambda_{j^{\prime}}^{\prime}>0$, $\mu_j, \mu_{j^{\prime}}^{\prime}\in \C$, 
and $\Gamma(s)$ denotes the usual gamma function, such that
\begin{equation}\label{fnaleqn}
\Phi(s):=Q^{s}G(s)F(s)=\omega\overline{\Phi(1-\bar{s})}.
\end{equation}
\end{enumerate}

We will denote by $\asel$ the set of meromorphic functions satsifying (P1)-(P3). We set 
$d_F=2\sum_{j=1}^{r}\lambda_j-2\sum_{j'=1}^{r^{\prime}}\lambda_{j^{\prime}}^{\prime}$
In \cite{Ragh20} (see Theorem \ref{degwelldef} in Section \ref{degconj}), 
where the class $\asel$ was introduced, we showed that $d_F$ is an invariant
of $F(s)$, that is, it does not depend on the choice of gamma factors $G(s)$ 
in \eqref{fnaleqn}. The subset of $\asel$ consisting of elements of degree 
$d$ will be denoted $\asel_d$ and the subset of elements of degree $d$ with
abscissa of absolute convergence $\sigma_a\le \nu$  will be denoted $\asel_d(\nu)$.
 The main theorem of this paper is
\begin{theorem}\label{mainthm} For $1<d<2$, we have $\asel_d=\emptyset$.
\end{theorem}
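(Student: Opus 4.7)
The plan is to derive a contradiction by computing an integral transform of $F$ in two complementary ways and exploiting the mismatch.

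Suppose $F\in\asel_d$ with $1<d<2$. First, using (P2), (P3), and Stirling's formula, I would establish convexity bounds for $F$ in the critical strip. The key observation---already behind Theorem~\ref{degwelldef}---is that the leading Stirling asymptotic of $G$ depends only on $d_F$, so the quotient structure of (P3) does not obstruct standard arguments. Combined with Phragm\'en--Lindel\"of, this gives $|F(\sigma+it)|\ll (1+|t|)^{\mu(\sigma)+\varepsilon}$ for the linear $\mu$ with $\mu(\sigma_a)=0$ and $\mu(1-\sigma_a)=d(\sigma_a-1/2)$.

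Next, consider the Mellin--inverse integral
$$I(x)=\frac{1}{2\pi i}\int_{(c)}F(s)\Gamma(s)x^{-s}\,ds=\sum_{n\ge 1}a_ne^{-nx},\qquad c>\sigma_a.$$
Shifting the contour to $\re(s)=c'<1-\sigma_a$, picking up the finitely many residues permitted by (P2), and applying the functional equation (P3), the integral transforms into a dual identity
$$I(x)=R(x)+\sum_{m\ge 1}\bar a_m K(mQ^2x),\qquad K(y):=\frac{1}{2\pi i}\int_{(c'')}\frac{\bar G(1-s)}{G(s)}\Gamma(s)y^{-s}\,ds,$$
where $R(x)$ is a finite sum of residues and the Dirichlet series for $\overline{F(1-\bar s)}$ is unfolded in its half-plane of convergence before interchanging sum and integral.

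The heart of the argument is a steepest-descent analysis of $K(y)$ as $y\to 0^+$. Stirling reduces the phase of the integrand to $(1-d)s\log s - s\log y + O(s)$; for $1<d<2$ the saddle lies at a complex $s_0$ with $|s_0|\sim y^{-1/(d-1)}$, producing a kernel that oscillates at the nonlinear frequency $y^{-1/(d-1)}$. Matching the two expressions for $I(x)$ as $x\to 0^+$: the left side admits a regular (polynomial plus exponentially small) asymptotic expansion governed by $\sigma_a$ and $R$, while the right side is a superposition of oscillations at the frequencies $(mQ^2x)^{-1/(d-1)}$. Since $1/(d-1)>1$ is not an integer when $1<d<2$, the oscillatory contributions cannot cancel against a regular expansion unless all $a_m$ vanish, contradicting $F\not\equiv 0$.

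The main obstacle is making the saddle-point estimate for $K(y)$ fully uniform in the generality of (P3), where $G$ is a ratio of gamma products with possibly distinct $\lambda_j,\lambda'_{j'}$; the essential simplification over Kaczorowski--Perelli is that only the leading Stirling asymptotic of $G$ is used, and it depends only on $d_F$, so the argument bypasses the detailed nonlinear-twist machinery specific to the Selberg class.
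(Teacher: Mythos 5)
Your proposal takes a genuinely different route from the paper --- it is the classical Mellin-transform approach (expand $\sum_n a_n e^{-nx}$, shift the contour, apply the functional equation, and analyse the dual kernel $K(y)$ by steepest descent), which is essentially the strategy of Conrey--Ghosh for $d<1$ and of Kaczorowski--Perelli for $1<d<2$ --- but it has a genuine gap at exactly the point where the theorem is hard. The final step, ``the oscillatory contributions at frequencies $(mQ^2x)^{-1/(d-1)}$ cannot cancel against a regular expansion unless all $a_m$ vanish,'' is asserted rather than proved, and it is not a routine matching argument: the amplitudes of $K(mQ^2x)$ grow as $x\to 0^+$, the sum over $m$ does not converge absolutely in the relevant range, and a superposition of infinitely many oscillations with distinct nonlinear frequencies can in principle resum to something regular. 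Making this non-cancellation rigorous is precisely what forced Kaczorowski and Perelli into their analysis of Fox hypergeometric functions and nonlinear twists $\sum_n a_n e^{-2\pi i n^{\kappa}\alpha}n^{-s}$ --- the machinery this paper is explicitly designed to avoid. Moreover, your stated mechanism (``$1/(d-1)>1$ is not an integer'') cannot be the operative one: at $d=2$ such dual oscillatory sums genuinely do reassemble into regular objects (degree-two $L$-functions exist), and for $2<d<3$ the exponent $1/(d-1)$ is again non-integral yet the conjecture remains open. Any correct proof must use $1<d<2$ quantitatively, as a threshold in an exponent, and your sketch contains no such inequality.

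For contrast, the paper works on the critical line rather than with $x\to 0^+$ asymptotics: it applies Soundararajan's oscillatory transform $\hktz$ of \eqref{hkdefn} to $F(1/2+it)$ over $t\in[4\pi\alpha T,6\pi\alpha T]$ and evaluates it twice --- directly, extracting the main term $\sqrt{2\pi}\sum_{T<n<4T}a_ne^{-2\pi i\alpha n}e^{-(n/T^{1+\rho_1})^p}$ via a refined stationary-phase lemma of Bombieri--Bourgain (Lemma \ref{midn}), and again after first applying the functional equation, where for $d>1$ the stationary point is pushed out of the range of integration and one gets only an error term. The two evaluations (Propositions \ref{directhk} and \ref{postfehk}) force $\bigl(\sum_{T<n<4T}|a_n|^2\bigr)^{1/2}=O(T^{\sigma_a-1/2-\delta/4})$ by Parseval, contradicting the $\Omega$-estimate \eqref{ltwolowerbound}; the condition $d<2$ enters precisely through the requirement $\delta=(2-d)/(d-\tfrac{1}{4\sigma_a})>0$ in \eqref{allchoices}. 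If you want to pursue your route, you would need to supply the analogue of that quantitative input --- some device that isolates a single nonlinear frequency from the dual sum and shows its coefficient must vanish --- which is a substantial undertaking, not a corollary of Stirling asymptotics.
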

In \cite{KaPe11}, Kaczorowski and Perelli proved the result above for the
extended Selberg class $\esel$, that is, when $\nu\le1$, $r'=0$, $\re(\mu_j)\ge 0$ for 
all $1\le j\le r$, and $P(s)=(s-1)^m$ for some integer $m\ge 0$.
This is perhaps the most difficult and interesting of their many
foundational theorems about $\esel$. We emphasise that our proof not only 
works for the (considerably) larger class $\asel$ and uses very different techniques, but also gives a much 
simpler and shorter argument even for the class $\esel$.

As the second author has explained in \cite{Ragh20}, the primary motivation 
for working in $\asel$ rather than in $\esel$ is that the former contains a much larger number 
of series that arise naturally from automorphic forms.
Dispensing with the requirement of absolute convergence for $\res>1$ allows
us to incorporate many more series within our theory.
The class $\asel(1)$ is already known
to contain the standard $L$-functions as well as the exterior square,
symmetric square and tensor product $L$-functions associated
to {\it globally generic} unitary automorphic representations
of $\glnak$, where $k$ is a number field and $\ak$ its
ring of ad\`eles. The class $\asel$ further contains the 
$L$-functions associated to even non-generic automorphic representation of $\glnak$.
The simplest such example is $\zeta(s-1/2)\zeta(s+1/2)$, which is the 
standard $L$-function associated to the trivial representation
of $GL_2(\aq)$. The $L$-series associated to Siegel modular forms
give another class of examples which often do not belong 
to $\esel$ but are part of the class $\asel$.

For the standard $L$-functions of unitary cuspidal automorphic 
$L$-functions, establishing the condition $\re(\mu_j)\ge 0$, $1\le j\le r$,
involves the Generalised Ramanujan Conjecture at infinity. This has not yet been
proven even in the case of the $L$-functions of cuspidal Maass eigenforms,
where the condition is equivalent to the famous Eigenvalue Conjecture
of Selberg. Thus, many of the most important $L$-functions in number theory
are easily seen to lie in $\asel$ or even in $\asel(1)$,
but are not known to lie in $\esel$.
                  
We remark that A. Booker has also defined a class of Dirichlet series $\bselp$ 
in \cite{Booker2015} similar to $\esel$ which contains 
the standard $L$-functions of unitary cuspidal automorphic representations 
of $\glnak$. His requirement of average Ramanujan bounds for the 
mean-square of the coefficients and an Euler product like axiom
makes his class more restrictive than ours. For instance,
the tensor product, exterior square and symmetric square
$L$-functions of even unitary cuspidal automorphic representations of 
$\glnak$ are not known to lie in $\bselp$.  Moreover, $L$-functions
like $\zeta(s-1/2)\zeta(s+1/2)$ which are associated to non-generic representations
do not belong to $\bselp$. 
All linear combinations of series in $\asel$ with the 
same gamma factors and the same factors $\omega Q^s$ in the 
functional equation belong $\asel$ but not, in general, to $\bselp$, since elements of 
the former class need not have an Euler product. Our class $\asel$ thus contains many
examples of series which do not lie in $\bselp$ even conjecturally, contains all the series that
are known to belong to $\bselp$, and almost 
certainly contains all of $\bselp$ (which a priori allows somewhat more general
gamma factors). Theorem \ref{mainthm} may thus be seen as a generalisation and extension of 
Theorem 1.7 of \cite{Booker2015}, which asserts that $\bselp_d=\emptyset$ for $1<d<5/3$.

Allowing Dirichlet series to have a finite number of poles at 
arbitrary locations also results in a larger class, since even the standard
$L$-functions of $GL_n$ may have poles at points other than $1$.
The removal of the condition $r^{\prime}=0$ almost certainly does
not enlarge our class further but does enable us to treat 
quotients of (automorphic) $L$-functions (these last two
weaker hypotheses are also features of $\bselp$).
For further motivation for introducing the class $\asel$ we refer 
to \cite{Ragh20}, where the contrasts with $\esel$ and $\bselp$ 
and some applications are discussed in greater detail.

\begin{remark} As we have also remarked in \cite{Ragh20}, we could allow an even 
more general functional equation of the form $\Phi(s)=Q^{s}\Psi(1-s)$ where $\Psi(s)=\tilde{G}(s)B(s)$ 
for some Dirichlet series $B(s)=\sum_{n=1}^{\infty}b_nn^{-s}$, convergent in some (right) half plane.
We avoid doing this since our paper is already overburdened by notation, even though 
this relaxation actually yields a slightly larger class of series.
\end{remark}

\subsection{A brief sketch of the proof of Theorem \ref{mainthm}} We give an outline of 
the proof of Theorem \ref{mainthm}. Since $\sigma_a$ is the abscissa of absolute convergence of $F(s)$, we have the
estimate $(\sum_{T<n<4T}|a_n|^2)^{1/2}=\Omega (T^{\sigma_a-1/2-\varepsilon})$ for every 
$\varepsilon>0$, where $a_n$ are the coefficients of the Dirichlet series $F(s)$. 
The aim is to get an $O$-estimate for this $\ell^2$-norm incompatible with the 
$\Omega$-estimate above. This is done by using the integral transform (for $T>0$ large enough that
$F(1/2+it)$ is holomorphic when $t>T$)
\[
\hktz :=\frac{1}{\sqrt{\alpha}}
\int_{4\pi\alpha T}^{6\pi\alpha T}
F(1/2+it)e^{it\log\left(\frac{t}{2\pi e\alpha}\right)-i\frac{\pi}{4}}dt,
\]
first introduced in \cite{Sound05} to treat the case $d=1$. A standard argument
allows one to replace $F(1/2+it)$ by an everywhere convergent Dirichlet series and 
error terms $r_1$ and $r_2$, the former arising from the poles of $F(1/2+it+w)$, and the latter
from the inverse Mellin transform of the function $F(1/2+it+w)\Gamma(w/p)$ on a line in 
the half-plane $\sigma<0$.  Applying the integral transform now yields 
\[
\hktz=\sqrt{2\pi}\sum_{T<n<4T}a_ne^{-2\pi i\alpha n}e^{(-n/T^{1+\rho_1})^p}+E+R_1+R_2,
\]
where $E$ is an error term arising from an application of the stationary phase method,
and $R_i$ arises from $r_i$, $i=1,2$ by applying the transform above. 
An important new input is an improvement in the method of stationary phase 
for oscillatory integrals due to Bombieri and Bourgain (Lemma 3.3 of \cite{BoBo04}), 
which is elementary but powerful, and allows the much better estimate 
$E=O(T^{\sigma_a-1/2-\varepsilon_1})$,
for some $\varepsilon_1>0$. This gives a saving of $T$ over the trivial estimate and 
a saving of $T^{2/5}$ over previous methods.
The real difficulty, however, is to obtain a similar saving for $R_2$. 
Our key idea is to use Lemma 3.3 of \cite{BoBo04} for estimating this error term.
This involves using the functional equation and then interchanging the sum and the 
integral by passing to the domain of absolute convergence of $F(s)$. 
This yields the same bound for $R_2$ as the one obtained for $E$. Thus, we get
\[
\hktz=\sqrt{2\pi}\sum_{T<n<4T}a_ne^{-2\pi i\alpha n}e^{(-n/T^{1+\rho_1})^p}+O(T^{\sigma_a-\frac{1}{2}-\varepsilon_1}),
\]
for some $\varepsilon_1>0$, when $\alpha$ and $T$ are chosen sufficiently large (see 
equation \ref{directhk}). On the other hand, applying the functional equation to $F(s)$ first
and then following the procedure above, gives $\hktz=O(T^{\sigma_a-1/2-\varepsilon_1})$ 
(as one sees in equation \eqref{secondhkeqn}).
These two estimates for $\hktz$ together yield $O(T^{\sigma_a-1/2-\varepsilon_1})$ for our
$\ell^2$-norm, for some $\varepsilon_1>0$, contradicting the $\Omega$-estimate above.

With the ideas above, we are able to avoid the delicate and technical analysis of the Fox
hypergeometric functions and other complications that arise in 
\cite{KaPe02} and \cite{KaPe11} altogether, and obtain a more direct proof 
of our more general theorem. Soundararajan has remarked in \cite{Sound05} that his methods 
would work for $\esel_d$ when $d<5/3$, but we have actually been able to extend them to $1<d<2$, 
and for the much larger class of functions $\asel$, because of our 
more refined handling of the error terms.

\section{Preliminaries}\label{prelim}
In the rest of this paper, for any function $a:\C\to \C$, we will use the notation $\tilde{a}(s):=\overline{a(\bar{s})}$.
We will also drop the subscript $F$ and simply use the letter $d$ 
for the degree $d_F$, $\lambda$ for $\lambda_F$ and $\mu$ instead of $\mu_F$. 
\subsection{The degree conjecture}\label{degconj}
In this subsection, we present a few basic results about $\asel$ from \cite{Ragh20}.
In the introduction we mentioned that the degree of an element in 
the class $\asel$ is well defined. The precise result (Theorem 4.1 of \cite{Ragh20}) is the following:
\begin{theorem}\label{degwelldef}  Suppose that $F(s)$ in $\asel$ satisfies 
functional equations $\Phi_i(s)=L_i^sG_i(s)F(s)=\omega_i\tilde{\Phi}_i(1-s)$, 
$i=1,2$, for two different functions $G_i(s)$ having the form given in 
\eqref{gammafactor}. Let $d_i$ denote the degree of $G_i(s)$ for $i=1,2$. Then $d_1=d_2$.
\end{theorem}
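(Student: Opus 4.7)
The plan is to eliminate $F$ from the two functional equations, obtain a purely gamma-factor identity for the ratio $H(s) := G_1(s)/G_2(s)$, and then read off $d_1 = d_2$ from Stirling's asymptotic on a vertical line.

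First, I would expand $\tilde{\Phi}_i(1-s) = \overline{\Phi_i(1-\bar{s})}$ in each of the two functional equations and rewrite them in the form
\[
L_i^s G_i(s) F(s) = \omega_i L_i^{1-s} \tilde{G}_i(1-s) \tilde{F}(1-s), \qquad i = 1,2.
\]
Since $F$ is a non-zero meromorphic function, the zero sets of $F(s)$ and of $\tilde{F}(1-s)$ are discrete, so I may divide the $i=1$ relation by the $i=2$ relation on their complement and, by meromorphic continuation to $\C$, obtain the identity
\[
\left(\frac{L_1}{L_2}\right)^{2s-1} \cdot \frac{H(s)}{\tilde{H}(1-s)} = \frac{\omega_1}{\omega_2}.
\]
The right-hand side is a nonzero constant of modulus $1$, and all $F$-dependence has disappeared.

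The second step is to determine the asymptotic behaviour of $|H(\sigma + it)/\tilde{H}(1 - \sigma - it)|$ for fixed $\sigma$ as $|t| \to \infty$. Applying Stirling's formula
\[
|\Gamma(\lambda s + \mu)| \asymp c(\sigma) |t|^{\lambda\sigma + \re(\mu) - 1/2} e^{-\pi\lambda |t|/2} \qquad (\lambda > 0)
\]
to each factor in $H = G_1/G_2$, and using that the signed sum of the $\lambda$-parameters of $H$ equals $(d_1 - d_2)/2$ by the definition of the degree, I would obtain
\[
|H(\sigma + it)| \asymp C(\sigma) |t|^{(d_1-d_2)\sigma/2 + B} e^{-\pi(d_1-d_2)|t|/4},
\]
for a real constant $B$ depending only on the real parts of the $\mu$'s and on the counts $r, r'$ (not on $\sigma$). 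Since the tilde operation conjugates the $\mu$'s and leaves $\re(\mu)$ invariant, $|\tilde{H}(1-s)| = |H(1 - \bar{s})|$ admits the same asymptotic with $\sigma$ replaced by $1 - \sigma$. The exponential factors then cancel in the ratio, leaving
\[
\left|\frac{H(\sigma + it)}{\tilde{H}(1 - \sigma - it)}\right| \asymp C'(\sigma) |t|^{(d_1 - d_2)(2\sigma - 1)/2}.
\]

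Combining the two steps, the identity from the first step gives, on taking absolute values,
\[
\left|\frac{L_1}{L_2}\right|^{2\sigma - 1} \cdot |t|^{(d_1 - d_2)(2\sigma - 1)/2} \asymp 1
\]
for every fixed $\sigma$ as $|t| \to \infty$. Choosing any $\sigma \neq 1/2$ then forces $(d_1 - d_2)(2\sigma - 1)/2 = 0$, hence $d_1 = d_2$. I do not expect a serious obstacle here: the algebraic manipulation of the functional equations is immediate, and the Stirling analysis is standard. The only point requiring genuine care is to verify that the $\mu$- and $(r,r')$-dependent constant $B$ enters identically in the asymptotics for $|H(\sigma + it)|$ and $|\tilde{H}(1-\sigma - it)|$, so that these non-degree contributions cancel in the ratio and leave behind only the degree-dependent exponent $(d_1-d_2)(2\sigma-1)/2$.
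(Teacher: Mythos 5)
Your argument is correct. Note that the paper does not actually prove Theorem \ref{degwelldef}; it is quoted from Theorem 4.1 of \cite{Ragh20}, so there is no in-paper proof to compare against. Your route --- dividing the two functional equations to eliminate $F$ (legitimate since $F\not\equiv 0$, by meromorphic continuation off the discrete zero/polar set), obtaining $\left(L_1/L_2\right)^{2s-1}H(s)/\tilde{H}(1-s)=\omega_1/\omega_2$, and then extracting the exponent $(d_1-d_2)(2\sigma-1)/2$ of $|t|$ from Stirling on a vertical line $\sigma\neq 1/2$ --- is the standard Conrey--Ghosh/Kaczorowski--Perelli argument for degree invariance, adapted correctly to the present setting where $G$ may contain gamma factors in the denominator (these simply enter the signed sums of the $\lambda_j$ and of the $\re(\mu_j)-1/2$ with a minus sign). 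The one point you flag as needing care, namely that the constant $B$ is the same for $|H(\sigma+it)|$ and $|\tilde{H}(1-\sigma-it)|$, does hold because $\re(\bar{\mu})=\re(\mu)$, and for fixed $\sigma$ and $|t|$ large the line avoids all poles and zeros of the gamma factors, so the asymptotics are applicable; I see no gap.
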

As we said in the introduction, the theorem above justifies the notation $d_F$ for 
the degree of an element $F(s)$ of $\asel$, and also the notation $\asel_d$ for the set of all 
$F(s)$ in $\asel$ such that $d_F=d$. We have the following classification theorems
which generalise the results of \cite{CoGh93} and \cite{KaPe99}.
\begin{theorem}\label{degzero} The set $\asel_0$ consists of 
Dirichlet polynomials of the form $\sum_{n\,|\,q}\frac{a_n}{n^s}$ for some integer $q$.
\end{theorem}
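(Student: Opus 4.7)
The plan is to prove the result in two steps: first show that every $F \in \asel_0$ is a Dirichlet polynomial (with at most $Q^2$ terms), and then extract the divisor structure from the functional equation.

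\textbf{Step 1 (finiteness of the Dirichlet series).} Since $d_F = 2\sum\lambda_j - 2\sum\lambda'_{j'} = 0$, applying Stirling's formula to each $\Gamma$-factor in $G(s)$ shows that the exponential $|t|$-growth of individual $\Gamma$'s cancels exactly, leaving $|G(\sigma+it)| \asymp C(\sigma)(1+|t|)^{B}$ as $|t|\to\infty$ on each vertical strip, with $B$ depending only on the $\mu_j, \mu'_{j'}, r, r'$. Combining this with the functional equation
\[ F(s) = \omega Q^{1-2s}\,\frac{\tilde G(1-s)}{G(s)}\,\tilde F(1-s), \]
and the boundedness of $\tilde F(1-s)$ for $\res < 1-\sigma_a$, one obtains the uniform bound $|F(\sigma+it)| \ll Q^{1-2\sigma}(1+|t|)^{B}$ for all $\sigma\le -M_0$. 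A truncated Perron inversion then gives
\[ \sum_{n\le x} a_n = F(0) + \sum_{\rho} \operatorname{Res}_{s=\rho}\Bigl(F(s)\frac{x^s}{s}\Bigr) \]
for every $x > Q^2$, where $\rho$ ranges over the finitely many poles of $F$: shifting the contour of $\frac{1}{2\pi i}\int_{(c)} F(s) x^s s^{-1}\,ds$ (with $c>\sigma_a$) to $\res=-M$ picks up these residues, and the vertical integral at $\res=-M$ is $\ll (Q^2/x)^{M}$ times polynomial factors, which vanishes as $M\to\infty$ whenever $x>Q^2$. Since the right side is independent of $x$, the partial sums are eventually constant, forcing $a_n = 0$ for all $n>Q^2$.

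\textbf{Step 2 (divisibility).} Now $F(s) = \sum_{n=1}^{N} a_n n^{-s}$ with $N\le Q^2$, and $F$ is entire. I would use the Gauss--Legendre multiplication formula to rewrite every $\Gamma(\lambda_j s + \mu_j)$ and $\Gamma(\lambda'_{j'}s + \mu'_{j'})$ with a common denominator $N_0$, absorbing the resulting exponential factor $\kappa^s$ (with $\kappa>0$) into $Q^s$. Since $d_F = 0$, the numerator and denominator then contain equal numbers of $\Gamma(s/N_0 + \cdot)$ factors. Matching the Dirichlet polynomial asymptotics of the two sides of the functional equation as $\sigma\to\pm\infty$ constrains the remaining gamma quotient $H(s)/\tilde H(1-s)$ to equal a constant, reducing the functional equation to
\[ \sum_{n=1}^{N} a_n n^{-s} \;=\; \omega' q^{1-2s}\sum_{n=1}^{N} \bar a_n n^{s-1} \;=\; \omega' q\sum_{n=1}^{N} \frac{\bar a_n}{n}\,(q/n)^{-s}, \]
with $q=(\kappa Q)^{2}$. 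Equating Dirichlet coefficients on both sides forces $q/n\in\N$ whenever $a_n\ne 0$; in particular $q\in\N$ and $n\mid q$ as required.

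\textbf{Main obstacle.} The delicate point is the collapse of $H(s)/\tilde H(1-s)$ to a constant under the weaker hypotheses of $\asel$: the $\mu_j, \mu'_{j'}$ are unrestricted complex and $r'$ may be positive. The reflection formula $\Gamma(z)\Gamma(1-z)=\pi/\sin(\pi z)$ introduces trigonometric factors that could in principle survive the reduction; ruling them out requires a careful Phragm\'en--Lindel\"of argument exploiting that both sides of the equation are Dirichlet polynomials, hence of at most polynomial growth on vertical lines. Tracking possible poles of $F$ away from $s=1$ (allowed in $\asel$ but not in $\esel$) through the Perron shift is an additional feature that distinguishes this case from the classical arguments of Conrey--Ghosh and Kaczorowski--Perelli.
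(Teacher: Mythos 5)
Your route is genuinely different from the paper's: the paper does not prove this theorem in-text at all, but reduces it to Theorem 4.3 of \cite{Ragh20} (which shows $\asel_0=\esel_0$) and then quotes the Kaczorowski--Perelli classification of $\esel_0$ from \cite{KaPe99}, whereas you attempt a self-contained Conrey--Ghosh-style argument. The architecture you propose (Perron truncation to force a Dirichlet polynomial, then the functional equation to force divisibility) is the right one, but the decisive step is missing. Everything that distinguishes $\asel_0$ from the classical setting --- unrestricted $\mu_j,\mu'_{j'}\in\C$, inverted gamma factors $r'>0$, and poles of $F$ at arbitrary points --- is concentrated exactly in your claim that the residual gamma quotient collapses to a constant, and you defer that claim to an unspecified Phragm\'en--Lindel\"of argument. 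A growth bound cannot do this job: by \eqref{fourthgammaest} with $d=0$ the quotient $\tilde{G}(1-s)/G(s)$ behaves like $C^{\frac{1}{2}-\sigma}e^{iB}C^{-it}t^{iA}(1+O(1/t))$, and the obstruction $t^{iA}$ has modulus $1$, so it is invisible to any Phragm\'en--Lindel\"of estimate; what kills it is that both sides of the reduced functional equation are finite exponential sums, hence almost periodic on vertical lines, while $e^{iA\log t}$ is not almost periodic unless $A=0$. Likewise the trigonometric factors from the reflection formula are excluded by matching zero/pole configurations (gamma products have zeros and poles escaping horizontally, a ratio of Dirichlet polynomials confines them to a vertical strip), not by growth. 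Without this, the conclusion $n\mid q$ is not established.

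Step 1 also needs repair, though the repairs are standard. The shifted integral $\int_{\re(s)=-M}F(s)x^ss^{-1}\,ds$ is not absolutely convergent when the polynomial exponent $B$ of the gamma quotient is nonnegative, so you need a smoothed kernel (e.g.\ $x^s/(s(s+1)\cdots(s+k))$, or a $\Gamma(w/p)$ factor as in Lemma \ref{basiclemma}). More seriously, the right-hand side of your Perron identity is \emph{not} independent of $x$: residues at poles $\rho\neq 0$ of $F$ contribute $x^{\rho}P_{\rho}(\log x)$, and if $F$ has a pole at $s=0$ (which $\asel$ permits) the term $F(0)$ is meaningless. The conclusion $a_n=0$ for large $n$ can still be salvaged, because a piecewise-constant function that agrees with an analytic function of $x$ on an interval forces that analytic function to be constant, but this argument has to be made explicitly. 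Finally, the correct cutoff is not $Q^2$ but the conductor $CQ^2$ with $C=\prod_j\lambda_j^{2\lambda_j}\prod_{j'}(\lambda'_{j'})^{-2\lambda'_{j'}}$, consistent with the $q=(\kappa Q)^2$ you introduce later; as written, Steps 1 and 2 use incompatible thresholds.
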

In fact, Theorem 4.3 of \cite{Ragh20} shows that $\asel_0=\esel_0$. Thus, the more precise classification 
for series in $\esel_0$ due to Kaczorowski and Perelli in Theorem 1 of \cite{KaPe99} is valid for $\asel_0$. 
We also have Theorem 4.7 of \cite{Ragh20} which generalises the theorems of \cite{CoGh93} and
\cite{Richert57}:
\begin{theorem}\label{degzeroone} For $0<d<1$, $\asel_d=\emptyset$.
\end{theorem}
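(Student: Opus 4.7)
The plan is to argue by contradiction following the same framework as the sketch of Theorem \ref{mainthm}, with the range $0<d<1$ providing significant simplifications. Suppose $F \in \asel_d$ is non-zero with $0<d<1$. Because $\sigma_a$ is the abscissa of absolute convergence of $F$, we have the $\Omega$-estimate $\bigl(\sum_{T<n<4T}|a_n|^2\bigr)^{1/2} = \Omega(T^{\sigma_a - 1/2 - \varepsilon})$ for every $\varepsilon > 0$. I would derive an incompatible $O$-bound for the same $\ell^2$-norm by analysing the integral transform $H(\alpha,T)$ in two ways, for $\alpha$ and $T$ chosen suitably.

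For the direct computation, Mellin inversion with a $\Gamma(w/p)$-smoothing replaces $F(1/2+it)$ by an everywhere-convergent Dirichlet series plus error terms $r_1$ (from poles of $F$) and $r_2$ (from a contour shift into $\re(w)<0$). The Bombieri--Bourgain refinement of stationary phase (Lemma 3.3 of \cite{BoBo04}) then yields, exactly as in the sketch for Theorem \ref{mainthm},
\[
H(\alpha,T) = \sqrt{2\pi}\sum_{T<n<4T} a_n\,e^{-2\pi i\alpha n}\,e^{(-n/T^{1+\rho_1})^p} + O(T^{\sigma_a - 1/2 - \varepsilon_1})
\]
for some $\varepsilon_1>0$.

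For the dual computation, I would first apply the functional equation to $F$, then interchange the resulting sum (over the dual variable $m$) with the $t$-integral via a second $\Gamma(w/p)$-smoothing and a contour shift to $\re(w) > \sigma_a$ (where $\tilde F$ converges absolutely), and apply stationary phase term by term. A short Stirling calculation shows that the combined phase of the $m$-th integrand has the form $(1-d)\,t\log t + K(\alpha,m)\,t + O(1)$, whose stationary point lies at $t_{*}(m) \asymp (\alpha/m)^{1/(1-d)}$. For $0 < d < 1$ and $\alpha$ chosen small relative to an appropriate power of $T$, the point $t_{*}(m)$ falls outside the integration window $[4\pi\alpha T,\,6\pi\alpha T]$ for every integer $m \geq 1$; the dual side is therefore purely non-stationary, and a second application of Lemma 3.3 of \cite{BoBo04} gives $H(\alpha,T) = O(T^{\sigma_a - 1/2 - \varepsilon_1})$ with no main term. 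It is here that the hypothesis $d < 1$ is essential: for $d \geq 1$ the dual stationary point can fall \emph{inside} the integration window for a nontrivial range of $m$, producing a dual main term that must be handled separately, as in Theorem \ref{mainthm}.

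Equating the two expressions for $H(\alpha,T)$ and averaging over $\alpha$ in a short interval (Plancherel) produces $\bigl(\sum_{T<n<4T}|a_n|^2\bigr)^{1/2} = O(T^{\sigma_a - 1/2 - \varepsilon_1})$, contradicting the $\Omega$-estimate. The main technical obstacle, as in the proof of Theorem \ref{mainthm}, is the dual-side estimate: one must rigorously justify the interchange of summation and integration (passing through the half-plane of absolute convergence of $\tilde F$) and apply the Bombieri--Bourgain lemma uniformly in $m$. Once this is in place, the hypothesis $d < 1$ makes the argument strictly simpler than the case $1 < d < 2$ of Theorem \ref{mainthm}, since no dual main term ever arises.
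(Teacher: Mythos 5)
The paper does not actually prove Theorem \ref{degzeroone} here: it is imported as Theorem 4.7 of \cite{Ragh20}, where it is obtained by extending the classical arguments of \cite{CoGh93} and \cite{Richert57} (a Mellin-inversion analysis of $\sum_n a_ne^{-n/X}$ in which, for $0<d<1$, the dual terms decay super-polynomially, forcing $F$ to be a Dirichlet polynomial and hence of degree $0$). Your proposal instead transplants the $\hktz$ resonance method of Sections \ref{directhktzsec} and 4 to the range $0<d<1$, which is a genuinely different route. The strategy is sound, and your central observation is the right one: after the functional equation the phase behaves like $(1-d)t\log t$ plus lower-order terms, the dual stationary point sits at $t_*(n)\asymp (q\alpha/n)^{1/(1-d)}$, and this lies below the window $K_T=[4\pi\alpha T,6\pi\alpha T]$ for \emph{every} $n\ge 1$ as soon as $\alpha=T^{\delta}$ with $0<\delta<(1-d)/d$. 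The same condition also empties the resonating range $L_{(d-1),T}$ (now $n\sim q\alpha^dT^{d-1}<1$) in the $R_2$ analysis of the direct computation, so both evaluations are purely non-stationary apart from the genuine main term coming from $I_n$, whose extraction via Lemma \ref{midn} is independent of $d$. What your route buys is uniformity of method with the $1<d<2$ case; what it costs is that the whole two-sided apparatus (Lemma \ref{basiclemma}, the interchange of sum and integral on the line $u=-p+\eta$, Lemmas \ref{largen} and \ref{midn}) must be rerun, whereas the classical argument is self-contained and shorter in this range.

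Two points need more care than your sketch gives them, though neither is a fatal gap. First, $\alpha$ cannot simply be chosen ``small'': it must grow like a positive power of $T$, because the only saving over the trivial bound $X^{\sigma_a-\frac{1}{2}+\varepsilon}$ in \eqref{largenestprefe}, \eqref{smallnestprefe} and \eqref{sumjnthreeest} is the prefactor $\alpha^{-1/2}$. You therefore need $0<\delta<(1-d)/d$ \emph{together with} $\delta/2$ dominating the various $\varepsilon$- and $\rho_i$-losses; these constraints are compatible for every fixed $d\in(0,1)$, but the paper's own parameter choice \eqref{allchoices} gives $\delta<0$ when $d\le 1/(4\sigma_a)$ and must be replaced, so you cannot quote \eqref{allchoices} verbatim. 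Second, the non-stationary lower bound $m_1$ required by Lemma \ref{largen} on the dual side is of size $(1-d)\,|\log(t/t_*(n))|$: it is uniform in $n$ once $t_*(1)\le \tfrac{1}{2}\cdot 4\pi\alpha T$, but it degenerates as $d\to 1^{-}$, so the resulting $\varepsilon_1$ depends on $d$ --- harmless for a fixed $d$, and consistent with the fact that $\asel_1\neq\emptyset$.
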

When $d=1$, $\asel_1(1)$ is non-empty and we have a satisfactory classification of these
series in Theorem 5.1 of \cite{Ragh20} which we state below.
\begin{theorem}\label{degone} If $F(s)$ lies in $\asel_1(1)$ and converges absolutely 
for $\re(s)>1$, there exists a real number $a$ such that $a_nn^{ia}$ is periodic 
with period $q$ for some integer $q$. 
\end{theorem}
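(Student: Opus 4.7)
The plan is to reduce $F(s)\in\asel_1(1)$ to the shape of a shifted Dirichlet $L$-function via its functional equation, and to deduce periodicity of the shifted coefficients by combining this reduction with the classifications already available for lower degrees.

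First, I would normalize the gamma factor. Since $d_F=1$ forces $\sum_j\lambda_j-\sum_{j'}\lambda_{j'}'=1/2$, and Theorem \ref{degwelldef} permits replacing $G(s)$ by any equivalent gamma factor, I would apply the Legendre duplication formula together with $\Gamma(s+1)=s\Gamma(s)$ to cancel denominator $\Gamma$-factors against numerator $\Gamma$-factors, absorbing any residual rational function of $s$ into the polynomial $P(s)$ that controls the poles. This should reduce $G(s)$ to a single factor of the form $\Gamma(s/2+\mu)$ or $\Gamma(s+\mu)$. Second, I would absorb the modulus $Q$ by a vertical shift: writing $\pi Q^2 = q_0 e^{-2a}$ for appropriate real $a$ and $q_0>0$, the substitution $s\mapsto s+ia$ converts the functional equation for $F(s)$ into one for the Dirichlet series $\sum b_n n^{-s}$ with $b_n:=a_n n^{ia}$, whose $Q$-parameter depends only on $q_0$; the goal becomes to show the $b_n$ are periodic modulo some integer $q$ (in particular forcing $q_0$ to be a positive integer).

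Third, I would apply a twist-and-converse argument. For each rational $\alpha=p/q_0$, form the additive twist $F_\alpha(s):=\sum_n b_n e(n\alpha) n^{-s}$, split into residue classes modulo $q_0$, and use the functional equation for the shifted series to derive a functional equation for $F_\alpha(s)$. One checks that $F_\alpha(s)$ still lies in $\asel$ with degree at most $1$. The classifications in Theorem \ref{degzero} and Theorem \ref{degzeroone} rule out any contribution of degree strictly less than $1$ except for Dirichlet polynomials supported on divisors of $q_0$, and comparing the resulting constraints across all $\alpha\in\Q/\Z$ forces linear relations among the $b_n$ within residue classes modulo $q_0$, which is exactly the statement that $a_n n^{ia}$ is periodic of period $q_0$. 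The main obstacle, compared with the classical Kaczorowski--Perelli proof for $\esel_1$, is that $\asel$ permits denominator $\Gamma$-factors and arbitrary poles of $F(s)$, so the gamma-factor normalization and the twist step must be executed directly within $\asel$; ensuring that each twist remains in $\asel$ of degree exactly $1$, rather than leaving the class or dropping in degree without the shift $ia$ being consistent, is the technical heart of the argument.
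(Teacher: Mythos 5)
Your proposal diverges from the paper's route and, more importantly, contains gaps at its two load-bearing steps. The paper does not prove Theorem \ref{degone} by a twist-and-converse argument at all: it cites \cite{Ragh20}, which adapts Soundararajan's method from \cite{Sound05}, i.e.\ the same oscillatory-integral transform $\hktz$ used in the body of this paper. One evaluates $\frac{1}{\sqrt{\alpha}}\int_{K_T}F(1/2+it)e^{it\log(t/2\pi e\alpha)-i\pi/4}\,dt$ twice --- directly by stationary phase, and again after applying the functional equation --- and for $d=1$ the second evaluation produces a genuine dual main term (rather than the pure error term of Proposition \ref{postfehk}); comparing the two expressions for suitable $\alpha$ yields the periodicity of $a_nn^{iA}$ directly, with no need to attach functional equations to twists.

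The concrete gaps in your argument are these. First, the normalization of $G(s)$ to a single factor $\Gamma(s/2+\mu)$ or $\Gamma(s+\mu)$ is not available in $\asel$: the $\lambda_j,\lambda_{j'}'$ are arbitrary positive reals (e.g.\ $\Gamma(0.3s+\mu_1)\Gamma(0.2s+\mu_2)$ has degree $1$ but is not reducible by duplication and shift identities), there are denominator gamma factors, and Theorem \ref{degwelldef} asserts only that the degree is an invariant, not that two gamma factors of equal degree differ by a rational function. Second, and more seriously, the assertion that the additive twist $F_\alpha(s)=\sum_nb_ne(n\alpha)n^{-s}$ ``still lies in $\asel$ with degree at most $1$'' is the entire content of the theorem: for a general element of $\asel_1(1)$ there is no a priori functional equation for $F_\alpha$, and producing one is essentially equivalent to proving the periodicity you are after (it holds for linear combinations of shifted Dirichlet $L$-functions precisely because additive twists decompose into multiplicative ones). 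Asserting it as something ``one checks'' is circular. Finally, the shift $a$ is misidentified: it does not come from the modulus $Q$ (your relation $\pi Q^2=q_0e^{-2a}$ is vacuous since $Q>0$ allows $a=0$), but from the imaginary part of $\mu-\mu'$, i.e.\ the quantity $A$ producing the $t^{iA}$ factor in \eqref{fourthgammaest}; it is exactly this factor that twists the dual coefficients into $\overline{a_n}n^{iA}$ in the second evaluation of $\hktz$.
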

It is not hard to see that $F(s)$ must be a linear combination with coefficients in the ring of Dirichlet polynomials 
of the $L$-series $L(s,\chi)$, where the $\chi$ are primitive Dirichlet characters of the modulus $q$.
This theorem was proved in \cite{Ragh20}, by modifying the arguments in \cite{Sound05}.
For the class $\esel$ the analogous result was proved in Theorem 2 of  \cite{KaPe99}. We expect that we can
remove the condition $\sigma_a\le 1$ in the theorem above.
In light of the results above and the main theorem of this paper, it is natural to extend 
the degree conjecture for $\esel$ 
(see Section 1 of \cite{KaPe99}) to our larger class $\asel$.
\begin{conjecture} If $d\ge 0$ is not an integer, then $\asel_d=\emptyset$.
\end{conjecture}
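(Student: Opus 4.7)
The plan is to prove the Conjecture by extending the interval-by-interval approach used for Theorems \ref{degzeroone} and \ref{mainthm} to all non-integer degrees, proceeding through the intervals $(n,n+1)$ with $n \geq 2$. The results already established in the paper and in \cite{Ragh20} cover the range $0 \leq d < 2$, so the first open case is $2 < d < 3$, and one would work upward from there.

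For a fixed interval $n < d < n+1$, I would again aim to derive contradictory $\Omega$- and $O$-estimates for a dyadic second moment $\bigl(\sum_{T < m < 4T} |a_m|^2 \bigr)^{1/2}$. The $\Omega$-estimate from the abscissa of absolute convergence $\sigma_a$ is unchanged. For the $O$-estimate, I would adapt Soundararajan's integral transform $H(\alpha, T)$ used in the sketch of Section \ref{introduction}. The essential rescaling is that the effective conductor now satisfies $Q \sim T^{(d-1)/2}$, so the window of integration and the choice of $\alpha$ must be rebalanced to keep the unique stationary point of the phase inside $[4\pi\alpha T, 6\pi\alpha T]$. Decomposing $H(\alpha, T)$ by truncation and contour shift then produces the oscillating sum $\sum a_m e^{-2\pi i \alpha m}$ together with error terms $E$, $R_1$, $R_2$, exactly as in the sketch. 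The decisive improvement that unlocked $d < 2$ was the refined stationary phase bound of Bombieri and Bourgain (Lemma 3.3 of \cite{BoBo04}); any further progress will depend on an analogous improvement that remains effective as multiple critical points move closer together.

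The main obstacle, and the reason the Conjecture is open, is precisely that for $d \geq 2$ the estimate for $R_2$ breaks down. After applying the functional equation, the transformed phase acquires stationary points whose separation shrinks as $d$ grows, so Lemma 3.3 of \cite{BoBo04} can no longer deliver a saving over the trivial bound on its own. To push the method to $d < 3$, one could plausibly iterate the integral transform, in the spirit of van der Corput's $B$-process, re-expressing the error after one application of the functional equation as a new exponential sum of comparable type and then applying the transform again. Controlling the compounded error terms would be a delicate bookkeeping exercise but seems feasible in the next interval.

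For the full conjecture, however, I expect that a substantially different idea will be needed, perhaps a single analytic invariant of $F(s) \in \asel$ that directly detects the non-integrality of $d$, in the spirit of the nonlinear twist of Kaczorowski and Perelli but adapted to the more flexible setting of $\asel$. The advantage of $\asel$ over $\esel$, namely that one may manipulate ratios of gamma factors freely, might allow cleaner asymptotic expansions than those available in $\esel$. I expect this to be the harder but more decisive approach, and I suspect that settling the conjecture in full will require genuinely new ideas beyond those introduced in the present paper.
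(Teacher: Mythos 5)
The statement you were asked to prove is labeled a \emph{conjecture} in the paper, and the paper offers no proof of it: the authors establish only the cases $d=0$, $0<d<1$, the classification at $d=1$, and the new interval $1<d<2$, and they explicitly leave the general non-integer case open. Your submission correctly recognizes this, but it is a research programme rather than a proof, and so it cannot be accepted as one. The concrete gap is that nothing in your text closes the interval $2<d<3$ (or any later interval): you yourself identify that the estimate for the error term $R_2$ breaks down once $d\ge 2$, because after the functional equation the transformed phase acquires stationary points whose mutual separation shrinks, and the Bombieri--Bourgain lemma no longer yields a saving over the trivial bound. The proposed remedy --- iterating the integral transform in the spirit of van der Corput's $B$-process and re-expressing the error as a new exponential sum --- is not carried out, and it is precisely at this step that all known approaches (including Kaczorowski--Perelli's nonlinear twists for $\esel$) have so far stalled; there is no evidence in your sketch that the compounded error terms can actually be controlled.

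Your diagnosis of the obstruction is accurate and consistent with the paper's own account of why its method stops at $d=2$, and your closing remark that genuinely new ideas will be needed is the honest conclusion. But as a referee I must record that no proof has been given, by you or by the paper: the statement remains a conjecture, and your text should be read as a (reasonable) discussion of possible lines of attack, not as a verification of the claim.
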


\subsection{Variants of Stirling's formula}
In the domain $\C\setminus (-\infty,0]$ we choose the branch of the logarithm which is real on the positive real axis.
For $z$ lying in a sector of the form $-\pi+\theta_0<\Arg(z)<\pi-\theta_0$ with $\theta_0>0$, 
Stirling's formula states that
\[
\log \Gamma(z)=(z-1/2)\log z-z+\frac{1}{2}\log2\pi+O(1/|z|).
\]
If $z=x+iy$ and $c>0$, then for all $y$ with $|y|>c$, we have
\begin{flalign}
\log \Gamma(z)=(x-1/2)\log|z|-\frac{\pi y}{2|y|}y&+i\left(y\log|z|+(x-1/2)\frac{\pi y}{2|y|}-y\right)\nonumber\\
&+\frac{1}{2}\log 2\pi+O\left(\frac{x^2+1}{|y|}\right).
\end{flalign}
We apply this formula to the quotients of 
Gamma functions that appear in \eqref{fnaleqn} when $s=x+it$. For $1\le k=j\le r$, this yields
\begin{flalign}\label{thirdgammaest}
\frac{\Gamma\left(\lambda_k(1-x-it) +\bar{\mu}_k\right)}{\Gamma\left(\lambda_k(x+it) +\mu_k\right)}
=& \left(\frac{\lambda_kt}{e}\right)^{\lambda_k(1-2x)}e^{-2\lambda_kit\log\frac{t}{e}-2\lambda_kit\log\lambda_k+(\bar{\mu}_k-\mu_k)\log t}
\nonumber\\
&\times e^{(\bar{\mu}_k-\mu_k)\log\lambda_k-(\mu_k+\bar{\mu}_k+\lambda_k)i\frac{\pi}{2}+(\mu_k-\bar{\mu}_k)}
\cdot(1+O(1/t)),
\end{flalign}
where the implied constant depends  on $x$, $\lambda_k$ and $\mu_k$, and $t$ is large enough.
A similar formula holds when $\lambda_k$ and $\mu_k$ are replaced by 
$\lambda_{k^{\prime}}^{\prime}$ and $\mu_{k^{\prime}}^{\prime}$ respectively.
Taking the product over all $j$ and $j'$, we get
\begin{equation}\label{fourthgammaest}
\frac{\tilde{G}(1-x-it)}{G(x+it)}
= (Ce^{-d}t^{d})^{(\frac{1}{2}-x)} e^{-itd\log\frac{t}{e}}t^{iA}e^{iB}C^{-it}\cdot 
(1+O(1/t)),
\end{equation}
where 
\[
A=-i((\bar{\mu}-\mu)-(\bar{\mu^{\prime}}-\mu^{\prime})),\quad C=\prod_{j,j'=1}^{r,r'}{\lambda_j}^{2\lambda_j}
{\lambda_{j^{\prime}}^{\prime}}^{-2\lambda_{j^{\prime}}^{\prime}}, 
\]
and
\begin{flalign}
B=&-i\left(\sum_{j=1}^r(\bar{\mu}_j-\mu_j)\log\lambda_j-\sum_{j'=1}^r(\overline{\mu_{j^{\prime}}^{\prime}}-\mu_{j^{\prime}}^{\prime})\log\lambda_{j,}\right)\nonumber\\
&-(\mu-\bar{\mu})+(\mu^{\prime}-\bar{\mu^{\prime}})-((\mu-\bar{\mu})-(\mu^{\prime}-\bar{\mu^{\prime}})+d/2)\frac{\pi}{2},
\end{flalign}
with
\[
\mu=\sum_{j=1}^{r}\mu_j\quad\text{and}\quad \mu^{\prime}=\sum_{j^{\prime}=1}^{r}\mu_{j^{\prime}}^{\prime}.
\]
Note that $A\in \R$ and $C>0$. Replacing $x+it$ by $x+it+w$ ($w=u+iv$) in \eqref{fourthgammaest}, 
and taking absolute values, we obtain
\begin{equation}\label{zerothgammaest}
\frac{\tilde{G}(1-x-it-w)}{G(x+it+w)}\ll (1+|t+v|)^{-d(x-1/2+u)}.
\end{equation}

\subsection{Replacing $F(z+it)$ by a convergent Dirichlet series}
In this subsection we prove a lemma allowing one to represent the
function $F(z+it)$ by the sum of a Dirichlet series convergent at all points
$s=z+it$ where $F(s)$ is holomorphic, and error
terms over which one has considerable control. A proof of a slightly different
version of this lemma has appeared in \cite{Ragh20}, but since the changes
we have made are crucial, we give a proof in this paper as well. 
\begin{lemma}\label{basiclemma} Let $w=u+iv$, $z=x+iy$, $p>0$ and $d>0$.
If $F(s)$ is holomorphic at $s=z+it$ and $0<\eta<1-x+p-\sigma_a$, we have
\begin{equation}\label{basiclemmaeqn}
F(z+it)=\sum_{n=1}^{\infty}\frac{a_ne^{-(n/X)^{p}}}{n^{z+it}}
+r_1(t)+r_2(t),
\end{equation}
where $r_1(t):=r_1(z,t,X)=O(X^{\sigma_a-x}e^{-|t|/p})$ 
is identically zero if $F(z)$ is entire, and 
\begin{equation}\label{rtwoz}
r_2(t):=\frac{1}{2\pi i p}\int_{u=-p+\eta}F(z+it+w)X^{w}\Gamma(w/p) dw
\ll O(t^{d(\frac{1}{2}+p-x-\eta)}X^{-p+\eta}),
\end{equation}
where $u=-p+\eta$ is a line on which none of the poles of $F(z)$ lie. 
\end{lemma}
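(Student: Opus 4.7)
My plan is to start from the Mellin inversion formula
\[
e^{-Y}=\frac{1}{2\pi i}\int_{(c)}\Gamma(s)Y^{-s}\,ds,\qquad c>0,
\]
which after the substitutions $s=w/p$ and $Y=(n/X)^{p}$ becomes
\[
e^{-(n/X)^{p}}=\frac{1}{2\pi ip}\int_{(c')}\Gamma(w/p)(X/n)^{w}\,dw,\qquad c'>0.
\]
Multiplying by $a_{n}n^{-(z+it)}$ and summing over $n$, I expect the absolute convergence of the Dirichlet series for $F$ in $\re(s)>\sigma_{a}$ together with the exponential decay of $|\Gamma(w/p)|$ along vertical lines to justify interchanging the sum and the integral, giving, for any $c>\sigma_{a}-x$,
\[
\sum_{n=1}^{\infty}\frac{a_{n}e^{-(n/X)^{p}}}{n^{z+it}}=\frac{1}{2\pi ip}\int_{\re(w)=c}F(z+it+w)\Gamma(w/p)X^{w}\,dw.
\]

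Next I would shift the contour from $\re(w)=c$ to $\re(w)=-p+\eta$. The polynomial bound (P2) on $F$ in vertical strips combined with the Stirling decay of $\Gamma(w/p)$ should make the horizontal segments vanish in the limit. The only singularities crossed are the simple pole of $\Gamma(w/p)$ at $w=0$ (residue $p$) and the finitely many poles of $F(z+it+w)$ located at $w=s_{j}-z-it$ for each pole $s_{j}$ of $F$, so the residue theorem yields \eqref{basiclemmaeqn}: the pole at $w=0$ contributes $F(z+it)$, the poles of $F$ assemble into $r_{1}(t)$ (which vanishes when $F$ is entire), and the shifted integral is $r_{2}(t)$.

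The bound for $r_{1}(t)$ should follow easily. Every pole $s_{j}$ of $F$ satisfies $\re(s_{j})\le\sigma_{a}$ (since $F$ is bounded on $\re(s)>\sigma_{a}$), so $|X^{s_{j}-z}|\le X^{\sigma_{a}-x}$ for $X\ge 1$, while Stirling applied to $\Gamma((s_{j}-z-it)/p)$ gives decay of order $e^{-\pi|t|/(2p)}\ll e^{-|t|/p}$; summing the finitely many residues produces the claimed bound.

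The main obstacle will be the bound for $r_{2}(t)$, and this is where the functional equation enters. On the shifted line one has $\re(z+it+w)=x-p+\eta$, and the hypothesis $\eta<1-x+p-\sigma_{a}$ rearranges to $1-(x-p+\eta)>\sigma_{a}$. Using \eqref{fnaleqn} I would rewrite $F(z+it+w)$ as a unit modulus constant times $Q^{1-2(z+it+w)}\,\tilde{G}(1-z-it-w)/G(z+it+w)\cdot\tilde{F}(1-z-it-w)$. The last factor is uniformly bounded since it is the value of an absolutely convergent Dirichlet series, the estimate \eqref{zerothgammaest} supplies $|\tilde{G}/G|\ll(1+|t+v|)^{d(1/2+p-x-\eta)}$, and Stirling gives $|\Gamma(w/p)|\ll e^{-\pi|v|/(2p)}$. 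The exponential decay in $v$ confines the effective range of integration to $|v|\ll 1$, where $(1+|t+v|)\sim|t|$, and combining this with $|X^{w}|=X^{-p+\eta}$ yields the desired bound $r_{2}(t)\ll t^{d(1/2+p-x-\eta)}X^{-p+\eta}$.
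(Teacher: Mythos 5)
Your proposal is correct and follows essentially the same route as the paper's own proof: the Mellin/Cahen identity for the smoothed Dirichlet series, a contour shift to $\re(w)=-p+\eta$ picking up $F(z+it)$ at $w=0$ and the residues forming $r_1(t)$, and then the functional equation together with the absolute convergence of $\tilde{F}(1-z-it-w)$ and the gamma-quotient bound \eqref{zerothgammaest} to control $r_2(t)$. The only differences are cosmetic (you spell out the Mellin inversion and the residue $p$ of $\Gamma(w/p)$ at $w=0$, while the paper invokes Phragm\'en--Lindel\"of to justify the shift), so no changes are needed.
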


\begin{proof}
When $c>\sigma_a$, we have 
\[
\frac{1}{2\pi ip}\int_{u=c}F(z+it+w)X^w\Gamma(w/p)dw
=\sum_{n=1}^{\infty}\frac{a_ne^{-(n/X)^{p}}}{n^{z+it}}.
\]
If $\beta$ is a pole of $F(z)$,  when we move the line of 
integration from $u=c$ to $u=-p+\eta$ we will cross the poles of the 
integrand of the form 
$w=\beta-z-it$, and also pole at $w=0$ (this shifting of the line is made possible by the Phragm\'en-Lindel\"of
principle, since (P2) shows that $F(z)$ has at most polynomial growth in lacunary vertical strips).
The residue at $w=0$ is $F(z+it)$. For any pole $\beta$ of $F(z)$, 
we must have $\re(\beta)\le \sigma_a$, since $\sigma_a$ is the abscissa of 
absolute convergence. Hence, the residue at $\beta-z-it$ will be majorised by 
$X^{\sigma_a-x}e^{-|t|/p}$. We denote the sum of the residues by $r_1(t)$. It is 
obviously an entire function which is identically zero if $F(z)$ is entire and is
bounded by $X^{\sigma_a-x}e^{-|t|/p}$ in any case. It remains to estimate
\[
r_2(t)=\frac{1}{2\pi ip}\int_{u=-p+\eta}F(z+it+w)X^{w}\Gamma(w/p) dw.
\]
Using the functional equation \eqref{fnaleqn}, we obtain the expression
\[
\frac{1}{2\pi ip}\int_{u=-p+\eta}\omega Q^{1-2x-2it-2w}\frac{\tilde{G}(1-z-it-w)}{G(z+it+w)}\tilde{F}(1-z-it-w)
X^{w}\Gamma(w/p) dw
\]
for $r_2(t)$. Since $0<\eta<1-x+p-\sigma_a$, the series $\tilde{F}(1-z-it-w)$ converges absolutely and is bounded as
$t\to \infty$. The bound for $r_2(t)$ now follows easily from \eqref{zerothgammaest}.
This completes the proof.
\end{proof}
\begin{remark}\label{lemremark} One has the obvious analogue of the lemma 
above for the function $\tilde{F}(z)$ and it is proved in exactly the same way. 
\end{remark}
\begin{remark} What we will really need is that $p$ is large compared to $\sigma_a$. 
\end{remark}

\subsection{Estimates for Oscillatory integrals}
We will require two lemmas for estimating and evaluating oscillatory integrals.
The first lemma (which is standard) can be found in Section 1.2 of Chapter VIII in \cite{Stein93}.
\begin{lemma}\label{largen} Suppose that $g(t)$ is a function of bounded variation on 
an interval $K=[a,b]$ and $|g(t)|\le M$ for all $t\in K$. For any ${\mathcal C}^1$-function $f$ on $K$,
if $f^{\prime}(t)$ is monotonic and $|f^{\prime}(t)|\ge m_1$ on $K$,
\[
\int_K g(t)e^{if(t)}dt \ll \frac{1}{m_1}\left\{\vert M\vert +\int_K\vert g^{\prime}(t)\vert dt\right\}.
\]
\end{lemma}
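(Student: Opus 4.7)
The plan is to prove Lemma \ref{largen} by the standard first-derivative integration-by-parts trick. The key observation is that, since $f'(t)$ does not vanish on $K$, one can write
\[
e^{if(t)} = \frac{1}{if'(t)}\,\frac{d}{dt}\bigl(e^{if(t)}\bigr),
\]
and then integrate by parts in the Riemann--Stieltjes sense (to accommodate the hypothesis that $g$ is merely of bounded variation). This gives
\[
\int_K g(t)\,e^{if(t)}\,dt
= \left[\frac{g(t)\,e^{if(t)}}{if'(t)}\right]_a^b
  - \int_K \frac{e^{if(t)}}{i\,f'(t)}\,dg(t)
  + \int_K \frac{g(t)\,f''(t)}{i\,(f'(t))^2}\,e^{if(t)}\,dt,
\]
where the last integral should really be written as a Stieltjes integral against $d(1/f'(t))$ so as not to assume $C^2$ regularity of $f$.

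Now I would bound the three pieces separately. The boundary term is at most $2M/m_1$ because $|g|\le M$ and $|f'|\ge m_1$ on $K$. The first integral is bounded by $m_1^{-1}\int_K |dg(t)| = m_1^{-1}\int_K|g'(t)|\,dt$, where the last expression is understood as the total variation of $g$ on $K$ (which matches the notation in the statement). For the final piece, I would use the hypothesis that $f'$ is monotonic: this forces $f''$ (or the signed measure $d(1/f')$) to have constant sign, so
\[
\int_K \frac{|f''(t)|}{(f'(t))^2}\,dt
= \left|\int_K d\!\left(\frac{-1}{f'(t)}\right)\right|
= \left|\frac{1}{f'(a)} - \frac{1}{f'(b)}\right| \le \frac{2}{m_1},
\]
so this integral contributes at most $2M/m_1$. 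Collecting the three contributions gives the claimed estimate, up to an absolute constant absorbed into the $\ll$ symbol.

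The only mildly subtle point is handling the bounded-variation hypothesis on $g$ rigorously (the statement writes $\int_K|g'(t)|\,dt$, which must be interpreted as the total variation when $g$ is not $C^1$); once one uses Riemann--Stieltjes integration by parts this is painless. The other ingredient that requires the monotonicity hypothesis is the telescoping bound for $\int_K |f''|/(f')^2$, and this is really the heart of the lemma: without monotonicity of $f'$ one cannot replace $|f''|$ by $f''$ and integrate to a boundary difference. No real obstacle arises; the proof is a few lines and purely mechanical.
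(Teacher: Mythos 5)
Your proof is correct and is exactly the standard first-derivative (non-stationary phase) argument; the paper itself gives no proof of this lemma, citing instead Section 1.2 of Chapter VIII of Stein's book, where this same integration-by-parts argument appears. Your handling of the bounded-variation hypothesis via Riemann--Stieltjes integration and of the monotonicity of $f'$ via the telescoping bound for $\int_K |d(1/f')|$ is precisely what is needed, so there is nothing to add.
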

We will also use a modified version of Lemma 3.3 of \cite{BoBo04}. This improves the
usual estimates given by stationary phase techniques and yields much sharper error terms.
\begin{lemma}\label{midn}
Suppose that $f$ is a ${\mathscr C}^3$-function on an interval 
$K=[a,b]$, $f^{\prime\prime}(t)\ne 0$ on $K$. Let $g$ be a smooth bounded 
function on $K$ satsifying $|g(t)|\le M$, $g^{\prime}(t)\ll M/(b-a)$ and 
$g^{\prime\prime}(t)\ll M/\log (b-a)$. If $f^{\prime}(c)=0$ for some $c\in K$ and $m>0$ is such 
that  $|f^{\prime\prime\prime}(t)|\le m$ for 
$t\in K\cap [c-\left\vert\frac{f^{\prime\prime}(c)}{m}\right\vert,c+\left\vert\frac{f^{\prime\prime}(c)}{m}\right\vert]$,
then
\[
\int_K g(t)e^{if(t)}dt=e^{\pm i\frac{\pi}{4}}\frac{g(c)e^{if(c)}}{\sqrt{|f^{\prime\prime}(c)|}}
+O\left(\frac{mM}{|f^{\prime\prime}(c)|^2}\right)
+O\left(\frac{M}{|f^{\prime}(a)|}+\frac{M}{|f^{\prime}(b)|}\right).
\]
The $\pm$ in the expression above occurs according to the sign of $f^{\prime\prime}(c)$.
\end{lemma}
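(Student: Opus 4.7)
The plan is to localize at the stationary point $c$ and treat the ``near'' and ``far'' pieces of the integral separately. Set $\delta = |f''(c)|/m$ and write $K = K_1 \cup K_2$ with $K_1 = [c-\delta, c+\delta] \cap K$ and $K_2 = K \setminus K_1$. On $K_1$ I would extract the main term as a truncated Fresnel integral; on $K_2$ I would apply the non-stationary bound from Lemma \ref{largen}, that is, a single integration by parts.

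For the outer piece $K_2$, the hypothesis $f''(t) \ne 0$ on $K$ forces $f'$ to be monotonic on $K$, and $|f'''| \le m$ on $K_1$ combined with $f'(c) = 0$ yields the key lower bound $|f'(c \pm \delta)| \ge |f''(c)|^2/(2m)$ by integrating the inequality $|f''(s)| \ge |f''(c)| - m|s-c|$. Hence $|f'(t)| \ge |f''(c)|^2/(2m)$ throughout $K_2$. A single integration by parts then produces the boundary terms $g(a)e^{if(a)}/(if'(a))$ and $g(b)e^{if(b)}/(if'(b))$, which yield the stated $O(M/|f'(a)|) + O(M/|f'(b)|)$ errors, plus inner boundary terms at $c \pm \delta$ of size $O(mM/|f''(c)|^2)$. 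The residual integral is controlled using the lower bound on $|f'|$ together with $|g'| \ll M/(b-a)$, giving again $O(mM/|f''(c)|^2)$.

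For the inner piece $K_1$, expand $f(t) = f(c) + \tfrac{1}{2}f''(c)(t-c)^2 + R(t)$ with $|R(t)| \le m|t-c|^3/6$ and $|R'(t)| \le m(t-c)^2/2$. Replacing $g(t)$ by $g(c)$ and $e^{iR(t)}$ by $1$, the principal piece becomes
\begin{equation*}
g(c)\, e^{if(c)} \int_{K_1} e^{i f''(c)(t-c)^2/2}\, dt;
\end{equation*}
completing this to an integral over $\R$ costs $O(1/(\delta|f''(c)|)) = O(m/|f''(c)|^2)$ via one more integration by parts, and the Fresnel identity $\int_{\R} e^{i\alpha u^2/2}\,du = \sqrt{2\pi/|\alpha|}\,e^{\pm i\pi/4}$ then produces the advertised main term.

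The main obstacle is controlling the two corrections $g(t) - g(c)$ and $e^{iR(t)} - 1$ simultaneously, since on $K_1$ the remainder $|R(t)|$ may reach $|f''(c)|^3/m^2$, so $e^{iR}-1$ is not uniformly small. Following the refinement in Lemma 3.3 of \cite{BoBo04}, one treats these corrections by a second integration by parts, exploiting that the total phase derivative $f''(c)(t-c) + R'(t)$ has magnitude at least $\tfrac{1}{2}|f''(c)|\,|t-c|$ on $K_1$ (since $|R'(t)| \le \tfrac{1}{2}|f''(c)||t-c|$ there); the hypotheses $|g'| \ll M/(b-a)$ and $|g''| \ll M/\log(b-a)$ are precisely what is needed to ensure that the resulting correction integrals contribute only $O(mM/|f''(c)|^2)$. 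Adding the contributions from $K_1$ and $K_2$ then yields the lemma.
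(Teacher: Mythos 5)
Your proposal follows essentially the same route as the paper, which in fact offers no independent argument for this lemma: it simply asserts that the proof of Lemma 3.3 of \cite{BoBo04} (the case $g\equiv 1$) carries over verbatim to general $g$. Your quantitative skeleton checks out: with $\delta=|f''(c)|/m$ one does get $|f'(c\pm\delta)|\ge|f''(c)|^2/(2m)$ by integrating $|f''(s)|\ge|f''(c)|-m|s-c|$, the monotonicity of $f'$ extends this bound to all of $K_2$, the completion of the truncated Fresnel integral costs $O(1/(\delta|f''(c)|))=O(m/|f''(c)|^2)$, and $|R'(t)|\le m(t-c)^2/2\le\tfrac12|f''(c)||t-c|$ on $K_1$ as you claim. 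Two caveats. First, the Fresnel identity produces $\sqrt{2\pi/|f''(c)|}\,g(c)e^{if(c)}e^{\pm i\pi/4}$, which is $\sqrt{2\pi}$ times the main term as the lemma states it; the statement in the paper drops this constant (harmlessly for the application, where only the order of the main term matters), but your sketch should not claim the Fresnel computation gives the ``advertised'' term on the nose. Second, the one step you assert rather than execute --- that the corrections $g(t)-g(c)$ and $e^{iR(t)}-1$ contribute only $O(mM/|f''(c)|^2)$ --- is precisely the delicate point of the whole lemma: a direct absolute-value bound fails there, and after integrating by parts against the phase one meets a $1/|t-c|$ singularity at the stationary point whose logarithm is exactly what the hypothesis $g''\ll M/\log(b-a)$ is designed to absorb. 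You correctly identify this as the main obstacle and point to the right mechanism, but a complete proof must carry that computation out (as \cite{BoBo04} do for $g\equiv1$); as it stands your argument is a faithful and correct outline of the paper's intended proof rather than a self-contained replacement for it.
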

The proof of the lemma above follows  the proof of Lemma 3.3 of \cite{BoBo04}, which corresponds
to the case $g(t)\equiv 1$. Since the arguments for an arbitrary function
$g(t)$ satisfying the conditions of the lemma are virtually identical, we do not repeat them here.

\section{Evaluating $\hktz$ directly}\label{directhktzsec}
Let $d\ge 1$.
Since we are assuming that $F(s)$ is absolutely convergent for $\res>\sigma_a$, we have the estimate
\begin{equation}\label{averageram}
\sum_{n<X}|a_n|\ll_{\varepsilon} X^{\sigma_a+\varepsilon}
\end{equation}
for any $\varepsilon>0$. On the other hand, since we are assuming that the abscissa of absolute convergence is
$\sigma_a$, the Cacuhy-Schwartz inequlaity gives the $\Omega$-estimate
\begin{equation}\label{ltwolowerbound}
\left(\sum_{T<n<4T} |a_n|^2\right)^{\frac{1}{2}}=\Omega(T^{\sigma_a-\frac{1}{2}-\varepsilon})
\end{equation}
for any $\varepsilon>0$.

For any $\alpha\ge 1$ and $T$ large enough that $F(1/2+it)$ is holomorphic for $t\ge 4\pi\alpha T$, we define 
\begin{equation}\label{hkdefn}
\hktz :=\frac{1}{\sqrt{\alpha}}
\int_{K_T}
F(1/2+it)e^{it\log\left(\frac{t}{2\pi e\alpha}\right)-i\frac{\pi}{4}}dt,
\end{equation}
where $K_T=[4\pi\alpha T,6\pi\alpha T]$. We assume that $X_1=T^{1+\rho_1}$ with $\rho_1>0$, and 
that $T$ is chosen large enough that $T^{1+\rho_1}>4T$. A more precise choice of $\rho_1$ will be made
later. If we apply this transform to both sides of Lemma \ref{basiclemma} for $x=1/2$ and $X=X_1$, 
we obtain
\begin{equation}\label{bltransform}
\hktz=\frac{1}{\sqrt{\alpha}}\sum_{n=1}^{\infty}\frac{a_n}{\sqrt{n}}e^{-(n/X_1)^{p}}I_n
+R_1(\alpha,T)+R_2(\alpha,T),
\end{equation}
where $R_i(\alpha,T)=\frac{1}{\sqrt{\alpha}}\int_{K_T}r_i(t)e^{it\log\left(\frac{t}{2\pi e\alpha}\right)-i\frac{\pi}{4}}dt$,
for $i=1,2$, and 
\[
I_n=I_n(\alpha):=\frac{1}{2\pi i} \int_{K_T}e^{it\log\left(\frac{t}{2\pi e\alpha n}\right)
-i\frac{\pi}{4}}dt.
\]
\subsection{Evaluation of the main term} \label{subsecmt}
We evaluate the main term of \eqref{bltransform} involving the integral $I_n$ first, using 
Lemma \ref{largen} and Lemma \ref{midn}. Using the notation of those lemmas, 
we note that $I_n$ has the form $\int_Ke^{if(t)}dt$, where $K=K_T$,
and $f(t)$ is a smooth function on all of $\R^{+}$. Explicitly, we have
\begin{equation}\label{fdefn}
f(t)=t(\log t-\log 2\pi e\alpha n)-\pi/4
\end{equation}
and $g(t)\equiv 1$. We will also need the first three derivatives of 
$f(t)$ for our analysis:
\[
f^{\prime}(t)=\log(t/2\pi\alpha n),
\quad
f^{\prime\prime}(t)=1/t,
\quad
\text{and}\quad
f^{\prime\prime\prime}(t)=-1/t^2.
\]
We note that if $n\le T$ or $n\ge 4T$, 
$f^{\prime}(t)\ne 0$ in $K_T$. We will thus estimate the integrals 
$I_n$ for $n$ in these two ranges first. We treat the case when $n\ge 4T$
using Lemma \ref{largen}. Because of the exponential decay of the term
$e^{-(n/X_1)^{p}}$ when $n$ exceeds $X_1^{1+\varepsilon}$, it is enough to 
sum $n$ between $4T$ and $X_1^{1+\varepsilon}$  in \eqref{bltransform}.
In our situation, we take $g(t)=1$, $f(t)$ as in \eqref{fdefn} and $K=K_T$, 
as above, so when $n\ge 4T$,
\[
|f^{\prime}(t)|\ge 
\left\vert \log(6\pi\alpha T/8\pi \alpha T)\right\vert 
\ge \left\vert\log3/4\right\vert>0.
\]
It follows that $I_n=O(1)$ in this range. For $n\le T$, we have
\[
|f^{\prime}(t)|\ge \left\vert\log(4\pi\alpha T/2\pi\alpha T)\right\vert=\vert\log 2\vert>0.
\]
So, once again, $I_n=O(1)$ in this range. It follows from \eqref{averageram} that 
\begin{equation}\label{largenestprefe}
\frac{1}{\sqrt{\alpha}}\sum_{4T\le n}\frac{a_n}{\sqrt{n}}e^{-(n/X_1)^{p}}\cdot I_n
=O\left(\alpha^{-\frac{1}{2}}X_1^{\sigma_a-\frac{1}{2}+\varepsilon}\right)
\end{equation}
and 
\begin{equation}\label{smallnestprefe}
\frac{1}{\sqrt{\alpha}}\sum_{1 \le n\le T}\frac{a_n}{\sqrt{n}}e^{-(n/X_1)^{p}}\cdot I_n
=O\left(\alpha^{-\frac{1}{2}}X_1^{\sigma_a-\frac{1}{2}+\varepsilon}\right).
\end{equation}

To deal with $T<n<4T$, we require Lemma \ref{midn}. 
As before, we take $g(t)\equiv 1$ and $f(t)$ as in \eqref{fdefn}.
We observe that $f^{\prime}(c)=0$ exactly when
$c=2\pi\alpha n$, so $f^{\prime\prime}(c)=1/c=1/(2\pi\alpha)n$. 
In the interval $[c-\eta,c+\eta]$, $f^{\prime\prime\prime}(t)$ takes its
maximum at $c-\eta$. Thus, we require that 
\[
\frac{1}{\left(c-\frac{1}{cm}\right)^2}\le m,
\]
which is equivalent to the condition
\[
c^2m^2-3m+\frac{1}{c^2}\ge 0.
\]
We can easily verify that $m=3/c^2$ satisfies the above inequality. 
We have $a=6\pi\alpha T$ and $b=8\pi\alpha T$, which yield
\[
O\left(\frac{m}{|f^{\prime\prime}(c)|^2}\right)=O(1),\quad
O\left(\frac{1}{|f^{\prime}(a)|}\right)=O(1)\quad\text{and}\quad 
O\left(\frac{1}{|f^{\prime}(b)|}\right)=O(1)
\]
in both $\alpha$ and $T$ when $T< n< 4T$. Thus, Lemma \ref{midn} yields
\[
I_n=(2\pi\alpha)^{1/2}\sqrt{n}e^{-2\pi i\alpha n}+O(1).
\]

Combining this with \eqref{largenestprefe} and \eqref{smallnestprefe}, we get (for $X_1>4T$)
\begin{equation}\label{midnestpostfe}
\frac{1}{\sqrt{\alpha}}\sum_{n=1}^{\infty}
\frac{a_n}{\sqrt{n}}e^{-(n/X_1)^{p}}\cdot I_n
=\sqrt{2\pi}\sum_{T<n<4T}a_ne^{-2\pi i\alpha n}e^{(-n/X_1)^p}
+O\left(\alpha^{-\frac{1}{2}}X_1^{\sigma_a-\frac{1}{2}+\varepsilon}\right).
\end{equation}

\subsection{Estimating the error terms} \label{forwarderror} 
Recall that $X_1=T^{1+\rho_1}>4T$.
From the estimate for $r_1(t)$ in Lemma \ref{basiclemma} we have
$R_1(\alpha,T)=O(\alpha^{-\frac{1}{2}} X_1^{\sigma_a-1/2}e^{-\alpha T/p})$. 
This term thus decays exponentially in $\alpha$ and $T$. 
Further, if $F(z)$ is entire, we have $R_1(\alpha,T)=0$. 
We may thus disregard the term $R_1(\alpha,T)$ in future since it will be dominated
by the other terms.

Recalling the definition of $R_2(\alpha,T)$ from  equation \eqref{rtwoz},
and choosing $\eta=\eta_1$ and $u_1=-p+\eta_1$ for
$0<\eta_1<p-\sigma_a+1/2$, we see that
\[
R_2(\alpha,T)=\frac{1}{2\pi ip\sqrt{\alpha}}\int_{K_T}\int_{u=u_1}F(1/2+it+w)X_1^{w}\Gamma(w/p) dw
e^{it\log\left(\frac{t}{2\pi e\alpha}\right)-i\frac{\pi}{4}}dt.
\]
Applying the functional equation \eqref{fnaleqn} and interchanging the
order of integration, we obtain
\begin{flalign}
R_2(\alpha,T)=\frac{\omega}{2\pi ip\sqrt{\alpha }}
\int_{u=u_1}\int_{K_T}Q^{-2w-2it}&\frac{\tilde{G}(1/2-it-w)}{G(1/2+it+w)}
\tilde{F}(1/2-it-w)\nonumber\\
&\times e^{it\log\left(\frac{t}{2\pi e\alpha}\right)-i\frac{\pi}{4}}dt
\,\,X_1^{w}\Gamma(w/p) dw.
\end{flalign}
We denote the inner integral by $J(w)$.
We break the outer
integral at $v=\pm\log^2(\alpha T)$ to produce a sum of three integrals:
\begin{flalign}\label{threeint}
&R_2(\alpha,T)=\frac{\omega}{2\pi ip\sqrt{\alpha }}\Big(\int_{u_1-i\infty}^{u_1-i\log^2(\alpha T)}J(w)
X_1^{w}\Gamma(w/p) dw\nonumber\\
&+\int_{u_1+i\log^2(\alpha T)}^{u_1+i\infty}J(w)
X_1^{w}\Gamma(w/p) dw
+\int_{u_1-i\log^2(\alpha T)}^{u_1+i\log^2(\alpha T)}J(w)
X_1^{w}\Gamma(w/p) dw\Big).
\end{flalign}
Using \eqref{zerothgammaest}, it is easy to see that when $|v|\ge \log^2(\alpha T)$,
$J(w)$ can be bounded by $e^{c\log(\alpha T)}$ for some $c>0$. On the other hand,
we know that $\Gamma(w/p)=O(e^{-\log^2(\alpha T/p)})$ in this range. 
It follows that the first and second integrals in \eqref{threeint} are
$O(1)$ (recall that $X_1=T^{1+\rho_1}$). 

We will use the notation
$a(t)\sim b(t)$ to mean $c_1b(t)<a(t)<c_2b(t)$ for some constants $c_1,c_2>0$, and $t$
large enough.
It remains to bound the third integral. In the range of integration, we have $t+v>0$ and
$t+v\sim \alpha T$.
Using \eqref{fourthgammaest}, we can write
\begin{equation}
J(w)=\int_{K_T}(DQ^2(t+v)^d)^{p-\eta_1}e^{if_0(t)}
\tilde{F}(1/2-it-w)(1+O(1/t))dt,\nonumber
\end{equation}
where
\[
f_0(t)=-d(t+v)\log (e^{-1}(CQ^2)^{1/d}(t+v))
+t\log(t/2\pi e\alpha)+A\log (t+v)+B-\frac{\pi}{4}
\]
and $D=Ce^{-d}$.
When $\eta_1<p-\sigma_a+1/2$ the series $\tilde{F}(1/2-it-w)$ converges
absolutely, so we can switch the integral and the sum in the expression for 
$J(w)$. Thus we can rewrite $J(w)$ as 
\begin{equation}\label{innerint}
\sum_{n=1}^{\infty}\frac{\overline{a_n}}{n^{\frac{1}{2}-w}}
(J_n^{(1)}+J_n^{(2)}).
\end{equation}
Here
\begin{equation}
J_n^{(1)}=\int_{K_T}g_1(t)e^{if_1(t)}dt\quad\text{and}\quad 
J_n^{(2)}=\int_{K_T}g_1(t)e^{f_1(t)}O(1/t)dt,\nonumber
\end{equation}
where 
\[
f_1(t)=-d(t+v)\log (e^{-1}(CQ^2)^{1/d}(t+v))
+t\log(tn/2\pi e\alpha)+A\log (t+v)+B-\frac{\pi}{4},
\]
and $g_1(t)=(DQ^2(t+v)^d)^{p-\eta_1}$.
We estimate the integrals $J_n^{(1)}$ and $J_n^{(2)}$ using Lemma \ref{largen}
and Lemma \ref{midn}. It will be useful to set $q=2\pi CQ^2$.
We have
\[
 f_1^{\prime}(t)=-d\log\left((q\alpha n^{-1})^{1/d}t^{-1/d}(t+v)\right)+A/(t+v),
\]
\[
 f_1^{\prime\prime}(t)=-(d-1)/(t+v)+v/t(t+v)-A/(t+v)^2\,\,\text{and}\,\,
\]
\[
 f_1^{\prime\prime\prime}(t)=(d-1)/(t+v)^2+(2t+v)/t^2(t+v)^2+2A/(t+v)^3
\]
Assume now that $d>1$. 
We note that for $t$ large and $v$ much smaller than $t$, the behaviour of $f_1(t)$ 
is very similar to that of $f(t)$ in equation \eqref{fdefn}, and the
integral $J_n^{(1)}$ can thus be estimated in a manner similar to the way we estimated $I_n$.
In particular, we have 
$ f_1^{\prime\prime}(t)\sim -1/t$ and $f_1^{\prime\prime\prime}(t)\sim1/t^2$,
since the second and third terms that occur in the expressions for these derivatives are much smaller
than the first. As before, we need to estimate the sum in three different ranges. We 
denote the set of 
$n$ satisfying $n<q\alpha^{d}(2\pi T)^{d-1}$ by 
$L_{(d-1),T}^{<}$, the set of $n$ in the interval $[q\alpha^{d}(2\pi T)^{d-1},
q\alpha^{d}(8\pi T)^{d-1}]$ by $L_{(d-1),T}$, and the set of 
$n$ satisfying $n>q\alpha^{d}(8\pi T)^{d-1}$ by $L_{(d-1),T}^{>}$.
Note that $f^{\prime}(x_n)=0$ and $x_n \in K_T$, 
means that $n\in L_{(d-1),T}$ for large $\alpha T$. Thus, we rewrite the first term of \eqref{innerint} as
\begin{equation}
\sum_{n=1}^{\infty}\frac{\overline{a_n}}{n^{\frac{1}{2}-w}}J_n^{(1)}=S_1(w)+S_2(w)+S_3(w),\nonumber
\end{equation}
where
\begin{flalign}\label{threeranges}
&S_1(w)=\sum_{n\in L_{(d-1),T}^{<}}\frac{\overline{a_n}}{n^{\frac{1}{2}-w}}
J_n^{(1)},\nonumber\\
&S_2(w)=\sum_{n\in L_{(d-1),T}}\frac{\overline{a_n}}{n^{\frac{1}{2}-w}}
J_n^{(1)}\,\,\text{and}\,\,\nonumber\\
&S_3(w)=\sum_{n\in L_{(d-1),T}^{>}}\frac{\overline{a_n}}{n^{\frac{1}{2}-w}}
J_n^{(1)}.\nonumber
\end{flalign}
We first majorise the contribution of $\int_{u=u_1}S_1(w)X_1^w\Gamma(w/p)dw$ 
to $R_2(\alpha,T)$. We take 
$\eta_1=p-\sigma_a+1/2-\varepsilon$ so the line of integration is
$u_1=-\sigma_a+1/2-\varepsilon$ 
in this case. We use Lemma \ref{largen} to estimate this sum.
In the notation of that lemma, we have $m_1\ge \log 2$ for $\alpha<T$ large enough,
since $ f_1^{\prime}(t)\sim \log (\alpha n^{-1})^{1/d}t^{1-1/d} $ in that case, 
while $M=g_1(b)$ is bounded above by $O((\alpha T)^{d(\sigma_a-\frac{1}{2}+\varepsilon)})$. The term
$\int_K|g_1^{\prime}(t)|dt$ is also bounded by $O((\alpha T)^{d(\sigma_a-\frac{1}{2}+\varepsilon)})$ for 
any $\varepsilon>0$. It follows
from \eqref{averageram} that $S_1(w)=O((\alpha T)^{d(\sigma_a-\frac{1}{2}+\varepsilon)})$. 
Hence,
\begin{equation}\label{soneest}
\frac{\omega}{2\pi ip\sqrt{\alpha}}\int_{u_1-i\log^2\alpha}^{u_1+i\log^2\alpha}
S_1(w)X_1^w\Gamma(w/p)dw=O(\alpha^{-\frac{1}{2}}(\alpha^dT^dX_1^{-1})^{\sigma_a-\frac{1}{2}+\varepsilon}).
\end{equation}
To estimate $S_3(w)$ we take $\eta_1=p-\sigma_a+1/2-\varepsilon$ so the line of integration is $u_1=-\sigma_a+1/2-\varepsilon$. Using Lemma \ref{largen} as above we obtain the 
bound $m_1\ge \log 4/3$ for $\alpha$ large enough. It follows from \eqref{averageram} that
\begin{equation}\label{sthreeest}
\frac{\omega}{2\pi ip\sqrt{\alpha}}\int_{u_1-i\log^2\alpha}^{u_1+i\log^2\alpha}
S_3(w)X_1^w\Gamma(w/p)dw=O(\alpha^{-\frac{1}{2}}(\alpha^dT^dX_1^{-1})^{\sigma_a-\frac{1}{2}+\varepsilon}).
\end{equation}

For estimating $S_2(w)$ we use Lemma \ref{midn}. 
If $f_1^{\prime}(x_n)=0$, we have
\begin{flalign}\label{jnoneeval}
J_n^{(1)}=(DQ^2)^{d(p-\eta_1)}(x_n+v)^{d(p-\eta_1)}(f_1^{\prime\prime}(x_n))^{-\frac{1}{2}}
&a_ne^{if_1(x_n)}+O((\alpha T)^{d(p-\eta_1)})\nonumber\\
&=:M_n+E_n.\nonumber
\end{flalign}
We recall that for $n\in L_{(d-1),T}$ we have $n\sim T^{d-1}\alpha^{d}$ and 
$x_n\sim \alpha T$. So for $n\in L_{(d-1),T}$, we have $f_1^{\prime\prime}(x_n)\sim 1/\alpha T$
and $f_1^{\prime\prime\prime}(x_n)\sim1/(\alpha T)^2$. It follows that
$M_n=O((\alpha T)^{d(p-\eta_1)+\frac{1}{2}})$.

When dealing with the error term $E_n$, we choose 
$\eta_1=p-\sigma_a+1/2-\varepsilon$ as before. Thus, 
$E_n=O((\alpha T)^{d(\sigma_a-1/2+\varepsilon})$. Since we are once again
in the domain of absolute convergence, 
\begin{equation}\label{encont}
\sum_{n\in L_{(d-1),T}}\frac{\overline{a_n}}{n^{\frac{1}{2}-w}}E_n=O((\alpha T)^{d(\sigma_a-1/2+\varepsilon)}).
\end{equation}
When dealing with $M_n$, we choose
$\eta_1=\varepsilon$. On the line $u_1=-p+\varepsilon$, we have
$M_n=O((\alpha T)^{d(p-\varepsilon)+1/2}$. On this line we
have $|n^{\frac{1}{2}-w}|\ge (q\alpha^{d}(2\pi T)^{d-1})^{p+1/2-\varepsilon}$.
Hence, summing over $n$ yields 
\begin{equation}\label{mncont}
\sum_{n\in L_{(d-1),T}}\frac{\overline{a_n}}{n^{\frac{1}{2}-w}}M_n=
O(\alpha^{\frac{1}{2}}(\alpha^dT^{d-1})^{\sigma_a-\frac{1}{2}+\varepsilon}T^{p-\varepsilon+\frac{1}{2}}).
\end{equation}
Using \eqref{encont} and \eqref{mncont}, we obtain
\begin{flalign}\label{stwoest}
\frac{\omega}{2\pi ip\sqrt{\alpha}}\int_{u_1-i\log^2\alpha}^{u_1+i\log^2\alpha}
S_2(w)X_1^w\Gamma(w/p)dw&=O(\alpha^{-\frac{1}{2}}(\alpha^dT^dX_1^{-1})^{\sigma_a-\frac{1}{2}+\varepsilon})\nonumber\\
&+O((\alpha^dT^{d-1})^{\sigma_a-\frac{1}{2}+\varepsilon}T^{p-\varepsilon+\frac{1}{2}}X_1^{\varepsilon-p}).
\end{flalign}
It is trivial to see that $J_n^{(2)}=O(\alpha^{d(p-\eta_1)})$. As in the error
term for $J_n^{(1)}$, we choose $\eta_1=p-\sigma_a+1/2-\varepsilon$. We thus obtain
\begin{equation}\label{jntwocont}
\frac{\omega}{2\pi ip\sqrt{\alpha}}\int_{u_1-i\log^2\alpha}^{u_1+i\log^2\alpha}\sum_{n=1}^{\infty}\frac{\overline{a_n}}{n^{\frac{1}{2}-w}}J_n^{(2)}X_1^w\Gamma(w/p)dw
=O(\alpha^{-\frac{1}{2}}(\alpha^dT^dX_1^{-1})^{\sigma_a-\frac{1}{2}+\varepsilon}).
\end{equation}
Combining the estimates in \eqref{midnestpostfe}, \eqref{soneest}, \eqref{sthreeest}, \eqref{stwoest} and 
\eqref{jntwocont}, we obtain
\begin{flalign}\label{preprelimhkeqn}
&\hktz=\sqrt{2\pi}\sum_{T<n<4T}a_ne^{-i(2\pi\alpha)n}e^{(-n/T^{1+\rho_1})^p}
+O\left(\alpha^{-\frac{1}{2}}X_1^{\sigma_a-\frac{1}{2}+\varepsilon}\right)\nonumber\\
&+O(\alpha^{-\frac{1}{2}}(\alpha^dT^dX_1^{-1})^{\sigma_a-\frac{1}{2}+\varepsilon})
+O((\alpha^dT^{d-1})^{\sigma_a-\frac{1}{2}+\varepsilon}T^{p-\varepsilon+\frac{1}{2}}X_1^{\varepsilon-p})
\end{flalign}
for $X_1=T^{1+\rho_1}>4T$. This yields
\begin{flalign}\label{prelimhkeqn}
&\hktz=\sqrt{2\pi}\sum_{T<n<4T}a_ne^{-i(2\pi\alpha)n}e^{(-n/T^{1+\rho_1})^p}+O(\alpha^{-\frac{1}{2}}T^{(1+\rho_1)(\sigma_a-\frac{1}{2}+\varepsilon)})\nonumber\\
&+O(\alpha^{d(\sigma_a-1/2+\varepsilon)-\frac{1}{2}}T^{(d-1-\rho_1)(\sigma_a-\frac{1}{2}+\varepsilon)})
+O(\alpha^{d(\sigma_a-\frac{1}{2}+\varepsilon)}T^{(d-1)(\sigma_a-\frac{1}{2}+\varepsilon)}T^{\rho_1(\varepsilon-p)+\frac{1}{2}}).
\end{flalign}  
We now assume $\alpha\in [T^{\delta},T^{\delta}+1]$, where $\delta>0$ will be chosen sufficiently small. Then
\begin{flalign}\label{nexthkeqn}
&\hktz=\sqrt{2\pi}\sum_{T<n<4T}a_ne^{-i(2\pi\alpha)n}e^{(-n/T^{1+\rho_1})^p}+O(T^{(1+\rho_1)(\sigma_a-\frac{1}{2}+\varepsilon)-\frac{\delta}{2}})\nonumber\\
&+O(T^{(d-1-\rho_1+d\delta)(\sigma_a-\frac{1}{2}+\varepsilon)-\frac{\delta}{2}})
+O(T^{d(1+\delta)(\sigma_a-\frac{1}{2}+\varepsilon)}T^{\rho_1(\varepsilon-p)+\frac{1}{2}+d\varepsilon}).
\end{flalign}  

Now we assume further that $d<2$ (recall that we are already assuming that $d>1$
and $\sigma_a\ge 1/2$). We choose 
\begin{equation}\label{allchoices}
\delta=\frac{2-d}{\left(d-\frac{1}{4\sigma_a}\right)},\,\,\rho_1=\frac{\delta}{8\sigma_a}\,\,\text{and}\,\,\varepsilon=\frac{\delta}{20}.
\end{equation}
Note that $0<\delta\le 2$ by choice, so $0<\varepsilon=\delta/20\le1/10$. Thus,
$\sigma_a-1/2+\varepsilon<\sigma_a$, so $\rho_1(\sigma_a-1/2+\varepsilon)<\delta/8$.
Hence, 
\[
(1+\rho_1)(\sigma_a-1/2+\varepsilon)-\delta/2<\sigma_a-1/2+\delta/20+\delta/8-\delta/2
<\sigma_a-1/2-\delta/4.
\]
From \eqref{allchoices} we see that $d\delta=2-d+2\rho_1$. Hence, $d-1-\rho_1+d\delta=1+\rho_1$, so 
$(d-1-\rho_1+d\delta)(\sigma_a-1/2+\varepsilon)-\delta/2)
<\sigma_a-1/2-\delta/4$ as above.

By choosing $p$ sufficiently large, we can ensure the last error term in \eqref{nexthkeqn} above is $O(T^{-\gamma})$,
for any $\gamma>0$, and that it is thus dominated by the two other terms.
We may summarise our arguments in this section as follows.
\begin{proposition}\label{directhk} 
Suppose $1<d<2$.  Then for $X_1=T^{1+\rho_1}>4T$ and $\alpha\in [T^{\delta},T^{\delta}+1]$, 
with $\delta$, $\varepsilon$ and $\rho_1$ chosen as in \eqref{allchoices},  
\begin{equation}\label{firsthkeqn}
\hktz=\sqrt{2\pi}\sum_{T<n<4T}a_ne^{-2\pi i\alpha n}e^{(-n/T^{1+\rho_1})^p}+O(T^{\sigma_a-\frac{1}{2}-\frac{\delta}{4}}).
\end{equation}  
\end{proposition}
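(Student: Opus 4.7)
The plan is to start from Lemma \ref{basiclemma} applied with $z=1/2$ and $X=X_1$: substituting the resulting expansion of $F(1/2+it)$ into the definition \eqref{hkdefn} of $\hktz$ produces a main sum $\frac{1}{\sqrt{\alpha}}\sum_n \frac{a_n}{\sqrt{n}}e^{-(n/X_1)^p}I_n$ together with two integrated error terms $R_1(\alpha,T)$ and $R_2(\alpha,T)$. The target bound will emerge after separately analysing each piece and finally choosing $\delta$, $\rho_1$, $\varepsilon$ and $p$ to balance the surviving error terms.

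For the main sum I would split the $n$-sum into three ranges. The phase $f(t)=t\log(t/2\pi e\alpha n)-\pi/4$ has $f'(t)=\log(t/2\pi\alpha n)$, so on $K_T=[4\pi\alpha T,6\pi\alpha T]$ the derivative is bounded below for $n\le T$ and $n\ge 4T$; Lemma \ref{largen} then gives $I_n=O(1)$ in those ranges, and combined with \eqref{averageram} produces an acceptable contribution of size $\alpha^{-1/2}X_1^{\sigma_a-1/2+\varepsilon}$. For $T<n<4T$ the stationary point $c=2\pi\alpha n$ lies in $K_T$, and Lemma \ref{midn} (with $m\sim c^{-2}$) yields $I_n=(2\pi\alpha)^{1/2}\sqrt{n}\,e^{-2\pi i\alpha n}+O(1)$, producing the desired oscillatory sum plus another $O(\alpha^{-1/2}X_1^{\sigma_a-1/2+\varepsilon})$ error. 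The term $R_1$ is harmless since $r_1(t)$ decays exponentially in $t$.

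The hard part is $R_2$. Here I would use the functional equation \eqref{fnaleqn} to rewrite the integrand in terms of $\tilde F(1/2-it-w)$ and the Stirling quotient \eqref{fourthgammaest}, then interchange the order of integration (legitimate because $\eta_1<p-\sigma_a+1/2$ places us in the region of absolute convergence of $\tilde F$) and expand $\tilde F$ as a Dirichlet series. The resulting inner integrals $J_n^{(1)}$ have phase $f_1(t)$ whose stationary points lie in $K_T$ exactly when $n\sim q\alpha^d T^{d-1}$; this partitions the $n$-sum into $L_{(d-1),T}^{<}$, $L_{(d-1),T}$ and $L_{(d-1),T}^{>}$. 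On the two outer ranges Lemma \ref{largen} with $\eta_1=p-\sigma_a+1/2-\varepsilon$ produces bounds of order $\alpha^{-1/2}(\alpha^dT^dX_1^{-1})^{\sigma_a-1/2+\varepsilon}$; on the critical range $L_{(d-1),T}$ Lemma \ref{midn} gives a main part $M_n$ and error $E_n$, which I would estimate by taking $\eta_1=\varepsilon$ for $M_n$ and $\eta_1=p-\sigma_a+1/2-\varepsilon$ for $E_n$. The secondary integral $J_n^{(2)}$ (from the $O(1/t)$ in Stirling) is controlled trivially, and the tails $|v|\ge \log^2(\alpha T)$ are killed by the Gamma factor $\Gamma(w/p)$.

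Assembling everything yields an identity of the form \eqref{nexthkeqn}. The final step is calibration: with $\alpha\in[T^\delta,T^\delta+1]$ and the choices \eqref{allchoices}, the exponents $(1+\rho_1)(\sigma_a-\tfrac12+\varepsilon)-\delta/2$ and $(d-1-\rho_1+d\delta)(\sigma_a-\tfrac12+\varepsilon)-\delta/2$ become equal (since $d\delta=2-d+2\rho_1$ forces $d-1-\rho_1+d\delta=1+\rho_1$) and both are strictly smaller than $\sigma_a-\tfrac12-\delta/4$; taking $p$ large kills the remaining error. The main obstacle, and the conceptual novelty, is the treatment of $R_2$: a direct trivial bound loses too much, and it is precisely the use of the sharper stationary phase estimate of Lemma \ref{midn} (Bombieri--Bourgain) after the functional-equation manipulation that yields an error of the same quality as $E$ in the main-sum analysis, which is what makes the argument succeed up to $d<2$ rather than only $d<5/3$.
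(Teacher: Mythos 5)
Your proposal is correct and follows essentially the same route as the paper's own argument: the same decomposition via Lemma \ref{basiclemma}, the same three-range analysis of $I_n$ using Lemmas \ref{largen} and \ref{midn}, the same functional-equation-plus-interchange treatment of $R_2(\alpha,T)$ with the splitting into $L_{(d-1),T}^{<}$, $L_{(d-1),T}$, $L_{(d-1),T}^{>}$ and the choices of $\eta_1$ for $M_n$ and $E_n$, and the same final calibration of $\delta$, $\rho_1$, $\varepsilon$ and $p$. You have also correctly identified the key point, namely that applying the Bombieri--Bourgain stationary phase estimate to $R_2$ after the functional equation is what pushes the range to $d<2$.
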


\section{A second method of estimating $\hktz$}
In this section we will complete the proof of Theorem \ref{mainthm} by
evaluating the function $\hktz$ in a second way, assuming that $1<d$ (note
the strict inequality which we also assumed in the latter part of the previous subsection). This will allow us
to complete the proof of Theorem \ref{mainthm}.

We first use the functional equation \eqref{fnaleqn} in \eqref{hkdefn} to obtain
\begin{equation}\label{applyfefirsttwo}
\hktz=
\frac{1}{\sqrt{\alpha}}\int_{K_T}
\omega Q^{-2it}\frac{\tilde{G}(1/2-it)}{G(1/2+it)}
\tilde{F}(1/2-it)e^{it\log\left(\frac{t}{2\pi e\alpha}\right)-i\frac{\pi}{4}}dt.
\end{equation}

Using \eqref{fourthgammaest} together with 
Lemma \ref{basiclemma} for $\tilde{F}(1/2-it)$ (see Remark \ref{lemremark}), gives (for $X_2>0$, $0<\eta_2<1-x+p-\sigma_a$ and $u=u_2=-p+\eta_2$)
\begin{flalign}\label{postfehkrefined}
\hktz =\frac{e^{iB}\omega}{\sqrt{\alpha}}
\sum_{n=1}^{\infty}\frac{\overline{a_n}}{\sqrt{n}}e^{-(n/X_2)^{p}}
(J_n^{(3)}+J_n^{(4)})
+\tilde{R}_1(\alpha,T)+\tilde{R}_2(\alpha,T),
\end{flalign}
where
\begin{equation}
J_n^{(3)}=\int_{K_T}g_2(t)e^{if_2(t)}dt\quad\text{and}\quad J_n^{(4)}=\int_{K_T}g_2(t)e^{if_2(t)}O(1/t)dt\nonumber
\end{equation}
and $\tilde{R}_i(\alpha,T)=\frac{e^{iB}\omega}{\sqrt{\alpha}}\int_{K_T}\tilde{r}_i(t)g_2(t)e^{if_2^0(t)}dt$, where $\tilde{r}_i(t)$, $i=1,2$
are the error terms appearing in Lemma \ref{basiclemma} applied to $\tilde{F}(1/2-it)$. 
Here we have set
\begin{flalign}
&f_2^0(t)=-t(d-1)\log(e^{-1}(2\pi CQ^2\alpha)^{1/(d-1)}t)
-A\log t+B-\frac{\pi}{4},\nonumber\\
 &f_2(t)=-t(d-1)\log(e^{-1}(2\pi CQ^2\alpha n^{-1})^{1/(d-1)}t)
-A\log t+B-\frac{\pi}{4}\nonumber\\
&=-t(d-1)\log(e^{-1}(q\alpha n^{-1})^{1/d-1}t)-A\log t+B-\frac{\pi}{4},\nonumber
\end{flalign}
$g_2(t)=1$, $K=K_T$, and $q=2\pi CQ^2$ as before.
We have
\[
 f_2^{\prime}(t)=-(d-1)\log\left((q\alpha n^{-1})^{1/d-1}t\right)-A/t,
 \]
 and
\[
f_2^{\prime\prime}(t)=-(d-1)/t+A/t^2,
\]
We estimate the integrals $J_n^{(3)}$ as we did the integrals $I_n$
(in fact, the integral $J_n^{(3)}$ resembles the integral $J_n^{(1)}$
more closely). As before, we see that $f_2^{\prime}(t)\sim \log\left((q\alpha n^{-1})^{1/d-1}(t+v)\right)$.
We see easily (as in the analysis of $J_n^{(1)}$) that if $f_2^{\prime}(y_n)=0$,
then $y_n\sim \alpha^d T^{d-1}$ when $y_n\in K_T$. Assume that
$\alpha\in [T^{\delta},T^{\delta}+1]$ with $\delta$ chosen as in \eqref{allchoices}.
We will choose $X_2=T^{1+\rho_2}$ with $\rho_2=\rho_1$. Thus, 
\eqref{allchoices} shows that $1+\rho_2<d-1+d\delta$.
Since we are only 
interested in the sum upto $X_2^{1+\varepsilon}$, if 
$\varepsilon$ is also given by \eqref{allchoices}, we see that we have the stronger
inequality
$(1+\rho_2)(1+\varepsilon)<d-1+d\delta$, and the case $f_2^{\prime}(y_n)=0$ will not occur in our analysis. 
Using Lemma \ref{largen}, we see 
(exactly as in the analysis of $I_n$ and $J_n^{(1)}$) that
$J_n^{(3)}=O(1)$. The integral $J_n^{(4)}$ can be estimated trivially to give $O(1)$.
It follows that
\begin{equation}\label{sumjnthreeest}
\frac{e^{iB}\omega}{\sqrt{\alpha}}
\sum_{n=1}^{\infty}\frac{\overline{a_n}}{\sqrt{n}}e^{-(n/X_2)^p}(J_n^{(3)}+J_n^{(4)})
=O(\alpha^{-\frac{1}{2}}X_2^{\sigma_a-\frac{1}{2}+\varepsilon}).
\end{equation}
The error term $\tilde{R}_1(\alpha,T)$ is easily seen to decay exponentially in $\alpha$ and $T$, as before
(and, in fact, will not occur if $F(z)$ is entire). The term $\tilde{R}_2(\alpha,T)$ is estimated
exactly as $R_2(\alpha,T)$ was. Once again, our analysis will yield 
\begin{equation}\label{rtwotildeest}
\tilde{R}_2(\alpha,T)=
O(\alpha^{-\frac{1}{2}}(\alpha^dT^dX_2^{-1})^{\sigma_a-\frac{1}{2}+\varepsilon})
+O((\alpha^dT^{d-1})^{\sigma_a-\frac{1}{2}+\varepsilon}T^{p-\varepsilon+\frac{1}{2}}X_2^{\varepsilon-p}).
\end{equation}
Combining \eqref{sumjnthreeest} and \eqref{rtwotildeest} above gives us
\begin{flalign}\label{beforeinterhktzest}
\hktz&=O(\alpha^{-\frac{1}{2}}X_2^{\sigma_a-\frac{1}{2}+\varepsilon})
+O(\alpha^{-\frac{1}{2}}(\alpha^dT^dX_2^{-1})^{\sigma_a-\frac{1}{2}+\varepsilon})\nonumber\\
&+O((\alpha^dT^{d-1})^{\sigma_a-\frac{1}{2}+\varepsilon}T^{p-\varepsilon+\frac{1}{2}}X_2^{\varepsilon-p}).
\end{flalign}
Now assume that $X_2=T^{1+\rho_2}$.
Substituting for $X_2$ and $\alpha$ in the other two terms gives the estimate
\begin{equation}\label{interhktzest}
\hktz=O(T^{(1+\rho_2)(\sigma_a-\frac{1}{2}+\varepsilon)-\frac{\delta}{2}})
+O(T^{(d-1-\rho_2+d\delta)(\sigma_a-\frac{1}{2}+\varepsilon)-\frac{\delta}{2}}).
\end{equation}
Since we have chosen $\rho_2=\rho_1$, and $\delta$ and $\varepsilon$ as in \eqref{allchoices}, 
the exponents of $T$ in occurring in the equation above are the same as those occurring in the first two terms of \eqref{nexthkeqn}. 
If $p$ is chosen large enough, the third term in the right-hand side above 
is bounded by $O(T^{-\gamma})$ for any $\gamma>0$, and it is thus dominated by the other terms. We thus have the following proposition.
\begin{proposition}\label{postfehk} Suppose $1<d<2$. For $X_2=T^{1+\rho_2}$ and for
$\alpha\in [T^{\delta},T^{\delta}+1]$, where $\delta$,
$\varepsilon$ and  $\rho_2=\rho_1$ are chosen as in\eqref{allchoices}, we have
\begin{equation}\label{secondhkeqn}
\hktz=O(T^{\sigma_a-\frac{1}{2}-\frac{\delta}{4}})
\end{equation}
for some $\varepsilon_2>0$.
\end{proposition}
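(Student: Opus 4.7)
The plan is to derive a second, independent estimate for $\hktz$ by inserting the functional equation \eqref{fnaleqn} into the definition of $\hktz$ \emph{before} invoking Lemma \ref{basiclemma}. Writing $F(1/2+it) = \omega Q^{-2it}\tilde{G}(1/2-it)G(1/2+it)^{-1}\tilde{F}(1/2-it)$ and simplifying the Gamma quotient via \eqref{fourthgammaest}, the integrand acquires a new phase whose leading term is $-(d-1)t\log t$ rather than $t\log t$. Consequently the stationary point $y_n$ of the resulting phase $f_2(t)$ now satisfies $y_n \sim \alpha^d T^{d-1}$, and the whole reason this second estimate will be sharper is that this $y_n$ lies well \emph{outside} $K_T = [4\pi\alpha T, 6\pi\alpha T]$ throughout the relevant summation range.

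Next I would apply Lemma \ref{basiclemma} to $\tilde{F}(1/2-it)$ (using Remark \ref{lemremark}), with $X_2 = T^{1+\rho_2}$ and shifting the line of integration to $u_2 = -p+\eta_2$. This decomposes $\hktz$ into a convergent sum involving integrals $J_n^{(3)}$ and $J_n^{(4)}$ (the latter absorbing the $O(1/t)$ from Stirling), plus the transforms $\tilde{R}_1(\alpha,T)$ and $\tilde{R}_2(\alpha,T)$ of the two error terms from the lemma. The essential preliminary is to verify that for $\alpha \in [T^\delta, T^\delta + 1]$ and $n \le X_2^{1+\varepsilon}$, no stationary point of $f_2$ lies in $K_T$. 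Under \eqref{allchoices} this reduces to the inequality $(1+\rho_2)(1+\varepsilon) < d-1+d\delta$, which follows from $d\delta = 2-d+2\rho_1$, $\rho_2 = \rho_1 = \delta/(8\sigma_a)$, and $\varepsilon = \delta/20$. Once this is established, Lemma \ref{largen} gives $J_n^{(3)} = O(1)$ uniformly in $n$, since $|f_2'|$ is bounded below by a positive constant on $K_T$, while $J_n^{(4)} = O(1)$ trivially. Summing against \eqref{averageram} yields the contribution $O(\alpha^{-1/2} X_2^{\sigma_a-1/2+\varepsilon})$.

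For the error terms, $\tilde{R}_1(\alpha,T)$ decays exponentially and vanishes if $F$ is entire, while $\tilde{R}_2(\alpha,T)$ is estimated by repeating the argument of Section \ref{forwarderror} essentially verbatim: apply the functional equation a second time, push to the domain of absolute convergence, interchange the sum and outer integral, split the sum into three ranges, and estimate via Lemma \ref{largen} (together with Lemma \ref{midn} on the middle range where a stationary point is present). This produces the estimate \eqref{rtwotildeest}, and combining everything gives \eqref{interhktzest}. Substituting the numerical choices from \eqref{allchoices}, the two exponents of $T$ appearing in \eqref{interhktzest} coincide with the first two exponents already analyzed in the paragraph after \eqref{nexthkeqn}, and both are shown there to be strictly smaller than $\sigma_a - 1/2 - \delta/4$. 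The leftover exponent of the shape $T^{\rho_2(\varepsilon - p) + 1/2 + d\varepsilon}$ can be made $O(T^{-\gamma})$ for any $\gamma > 0$ by choosing $p$ sufficiently large.

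The main obstacle I anticipate is precisely the bookkeeping needed to confirm that the stationary point of $f_2$ truly escapes $K_T$ throughout the entire range of summation: the inequality $(1+\rho_2)(1+\varepsilon) < d-1+d\delta$ is tight in the sense that it uses the precise calibration of \eqref{allchoices} and the hypothesis $d > 1$ in an essential way (the case $d = 1$ would fail). A secondary technicality is to verify a uniform lower bound for $|f_2'|$ on $K_T$ separately for small $n$ and large $n$, mirroring the split into $S_1$ and $S_3$ in Section \ref{forwarderror}, so that Lemma \ref{largen} genuinely applies uniformly.
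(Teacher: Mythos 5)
Your proposal is correct and follows essentially the same route as the paper's own proof: apply the functional equation \eqref{fnaleqn} before Lemma \ref{basiclemma}, use \eqref{fourthgammaest} to extract the phase $f_2$, verify via the calibration \eqref{allchoices} that no stationary point of $f_2$ enters $K_T$ for $n\le X_2^{1+\varepsilon}$ so that Lemma \ref{largen} gives $J_n^{(3)}=O(1)$, and estimate $\tilde{R}_2(\alpha,T)$ exactly as $R_2(\alpha,T)$ was estimated in Section \ref{forwarderror}. The points you flag as potential obstacles (the inequality $(1+\rho_2)(1+\varepsilon)<d-1+d\delta$ and the uniform lower bound on $|f_2'|$ in the two outer ranges) are precisely the steps the paper carries out.
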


The proof of Theorem \ref{mainthm} follows almost immediately from Propositions
\ref{directhk} and \ref{postfehk}.
Combining equation \eqref{firsthkeqn} and equation \eqref{secondhkeqn}, we get
\[
\sqrt{2\pi}\sum_{T<n<4T}a_ne^{-2\pi i\alpha n}e^{-(n/T^{1+\rho_1})^p}=O(T^{\sigma_a-\frac{1}{2}-\frac{\delta}{4}})
\]
for $\alpha\in [T^{\delta},T^{\delta}+1]$.
Taking the $L^2$-norm yields (by Parseval's formula)
\[
\left(\sum_{T<n<4T} |a_n|^2\right)^{\frac{1}{2}}=O(T^{\sigma_a-\frac{1}{2}-\frac{\delta}{4}}).
\]
But this contradicts the $\Omega$-estimate \eqref{ltwolowerbound} which is implied by the fact that the 
abscissa of absolute convergence of $F(z)$ is $\sigma_a$. This proves Theorem \ref{mainthm}.

\section*{Acknowledgements} 
The authors are grateful to D. Surya Ramana for many helpful comments, suggestions and corrections. The second author would further like to acknowledge his constant encouragement and support for this project.

\bibliographystyle{alpha}
\bibliography{../../../../../Bibtex/master2020}

\end{document}